\def\e{{\epsilon}}
\author[L. Vivas]{Liz Vivas}
\address{Liz Vivas\\ Department of Mathematics\\ The Ohio State University\\
\\ 231 West 18th Ave.\\ Columbus, OH, USA} \email{vivas@math.osu.edu}
\thanks{The author was partially supported by NSF Grant DMS-1800777.}
\date{}
\title{Non-Autonomous Parabolic Bifurcation}
\begin{document}
\begin{abstract}
Let $f(z) = z+z^2+O(z^3)$ and $f_\e(z) = f(z) + \e^2$. A classical result in parabolic bifurcation in one complex variable is the following: if $N-\frac{\pi}{\e}\to 0$ we obtain $(f_\e)^{N} \to \mathcal{L}_f$, where $\mathcal{L}_f$ is the Lavaurs map of $f$. In this paper we study a \textit{non-autonomous} parabolic bifurcation. We focus on the case of $f_0(z)=\frac{z}{1-z}$. Given a sequence $\{e_i\}_{1\leq i\leq N}$, we denote $f_n(z) = f_0(z) + \e_n^2$. We give sufficient and necessary conditions on the sequence $\{e_i\}$ that imply that $f_{N}\circ\ldots f_{1} \to \textrm{Id}$ (the Lavaurs map of $f_0$). We apply our results to prove parabolic bifurcation phenomenon in two dimensions for some class of maps. 
\end{abstract}
\newtheorem{theorem}{Theorem}
\newtheorem{lemma}{Lemma}
\newtheorem{proposition}{Proposition}
\newtheorem{corollary}{Corollary}
\newtheorem{example}{Example}
\newtheorem{question}{Question}
\newtheorem*{question*}{Question}

\theoremstyle{definition}
\newtheorem{defin}{Definition}

\theoremstyle{remark}
\newtheorem{remark}{Remark}

\newcommand{\NN}{\mathbb{N}}
\newcommand{\RR}{\mathbb{R}}
\newcommand{\CC}{\mathbb{C}}
\newcommand{\PP}{\mathbb{P}}
\newcommand{\ZZ}{\mathbb{Z}}
\newcommand{\R}{\mathcal{R}}
\newcommand{\TT}{\mathbb{T}}
\newcommand{\x}{\mathbf{x}\rm}
\newcommand{\y}{\mathbf{y}\rm}
\newcommand{\zbar}{\overline{z}}
\newcommand{\wbar}{\overline{w}}
\newcommand{\ubar}{\overline{u}}
\newcommand{\vbar}{\overline{v}}
\newcommand{\Ree}{\mathrm{Re}}
\newcommand{\Imm}{\mathrm{Im}}
\newcommand{\tp}{\widetilde{p}}

\def\a{{\alpha}}
\def\b{{\beta}}
\def\d{{\delta}}
\def\e{{\epsilon}}
\def\i{{\iota}}
\def\s{{\sigma}}
\def\t{{\tau}}
\def\r{{\rho}}
\def\g{{\gamma}}
\def\l{{\lambda}}
\def\th{{\theta}}

\def\O{{\Omega}}
\def\Aut{{\mathrm{Aut}}}

\maketitle

\section{Introduction}

The theory of parabolic bifurcation has been extensively studied in one dimension starting with the pioneering work of Lavaurs and Douady, as well as Shishikura \cite{Dou, Lav, Shi}. In recent years, parabolic bifurcation has been explored in several dimensions; Bedford, Smilie and Ueda studies semi parabolic bifurcations \cite{BSU} and Bianchi \cite{Bi} has studied parabolic bifurcations for a class of maps. Also the recent works of Dujardin and Lyubich \cite{DL} and Astorg et al \cite{ABDPR} have shown applications to new phenomena in several dimensions using higher dimension parabolic bifurcations.

In this article we propose to study parabolic bifurcation in two dimensions by considering \textit{non-autonomous} sequences of one dimensional M\"oebius maps. Let us recall the result in one dimension and explain our result.

\begin{theorem} (Lavaurs) Let $f$ be defined in a neighborhood $V$ of the origin and be of the form $f(z) = z + z^2 +O(z^3)$. Consider the perturbation of $f$ as follows: given $\e>0$ let $f_\e(z) := f(z) + \e^2$. If we take a sequence of number $N_\e$ such that $N_\e - \frac{\pi}{\e} \to 0$ then we obtain the following:
$$
(f_\e)^{N_\e} \to \mathcal{L}_f,
$$
where $\mathcal{L}_f$ is the Lavaurs map of $f$.
\end{theorem}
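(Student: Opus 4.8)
The plan is to follow the classical parabolic implosion argument of Douady and Lavaurs \cite{Dou, Lav}, organized around Fatou coordinates. First I would recall the Fatou coordinates of $f$: in the coordinate $w=-1/z$ the map takes the form $F(w)=w+1+o(1)$ near $\infty$, and on the attracting and repelling petals one obtains an incoming Fatou coordinate $\phi^{in}$ and an outgoing Fatou coordinate $\phi^{out}$, each conjugating $f$ to the unit translation $w\mapsto w+1$ and normalized by prescribing their asymptotic expansion at $0$ so that no additive ambiguity remains. With these normalizations the Lavaurs map appearing in the statement is $\mathcal{L}_f=\psi^{out}\circ\phi^{in}$ with $\psi^{out}=(\phi^{out})^{-1}$, defined on the attracting petal; the hypothesis $N_\e-\pi/\e\to 0$ is precisely the one selecting this (``phase $0$'') Lavaurs map. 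For general $f$ the matching between the hypothesis and the normalization involves a $\log\e$ correction governed by the iterative residue of $f$, which vanishes, for instance, for the M\"obius map $z/(1-z)$ treated in the body of the paper.

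Next I would set up the perturbed dynamics. For small $\e>0$ the map $f_\e$ has two fixed points $\a_\e^{\pm}=\pm i\e+O(\e^2)$ with multipliers $1\pm 2i\e+O(\e^2)$, so near $\a_\e^{\pm}$ it is approximately a rotation by $\pm 2\e$. Between them sits a ``gate'' $\O_\e$ that every orbit issued from a fixed compact subset of the attracting petal traverses, landing in the repelling petal once $\e$ is small enough; verifying this trapping statement is part of the standard machinery. On $\O_\e$ one builds a Fatou coordinate $\Phi_\e$ with $\Phi_\e\circ f_\e=\Phi_\e+1$, unique up to an additive constant, and uses two normalizations: $\Phi_\e^{in}$, chosen so that $\Phi_\e^{in}\to\phi^{in}$ locally uniformly on the attracting petal, and $\Phi_\e^{out}$, chosen so that $\Phi_\e^{out}\to\phi^{out}$ locally uniformly on the repelling petal. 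The convergence of these perturbed Fatou coordinates is the main (nontrivial) implosion estimate \cite{Lav, Shi}. Since $\Phi_\e^{in}$ and $\Phi_\e^{out}$ are Fatou coordinates for the same map on the same set, $\Phi_\e^{out}=\Phi_\e^{in}+\g(\e)$ for some constant $\g(\e)\in\CC$, and iterating the conjugacy relation yields, for $z$ in the attracting petal and $N$ steps reaching the repelling petal,
$$
\Phi_\e^{out}\bigl(f_\e^{N}(z)\bigr)=\Phi_\e^{in}(z)+N+\g(\e).
$$

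The heart of the matter is the asymptotics $\g(\e)=-\pi/\e+o(1)$ as $\e\to 0$ (the sign being fixed by the conventions above). I would derive it by comparing $\Phi_\e$ with the time parametrization of the model flow $\dot z=z^2+\e^2$, whose solutions $z(t)=\e\tan(\e(t+c))$ cross from the attracting side to the repelling side in time $\pi/\e+O(1)$, the bounded term being absorbed by the additive normalizations of $\phi^{in}$ and $\phi^{out}$; equivalently, one matches $\Phi_\e$ to the linearizing coordinates at $\a_\e^{\pm}$, near which it behaves like $\pm\frac{1}{2i\e}\log(\cdot)$. Making this precise requires controlling the $O(z^3)$ terms of $f$ uniformly along the entire transit of the gate (e.g.\ via \'Ecalle--Voronin-type estimates), and this is the step I expect to be the main obstacle. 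Granting it, the theorem follows immediately: if $N_\e-\pi/\e\to 0$ then $N_\e+\g(\e)\to 0$, so for $z$ in a compact subset of the attracting petal,
$$
f_\e^{N_\e}(z)=(\Phi_\e^{out})^{-1}\bigl(\Phi_\e^{in}(z)+N_\e+\g(\e)\bigr)\;\longrightarrow\;(\phi^{out})^{-1}\bigl(\phi^{in}(z)\bigr)=\mathcal{L}_f(z),
$$
locally uniformly, using $\Phi_\e^{in}\to\phi^{in}$, $\Phi_\e^{out}\to\phi^{out}$ and continuity of the inverse.
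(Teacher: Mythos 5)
Your sketch is the standard Douady--Lavaurs--Shishikura implosion argument (Fatou coordinates, their perturbed versions $\Phi_\e^{in},\Phi_\e^{out}$, and the phase asymptotics $\gamma(\e)=-\pi/\e+o(1)$), and as an outline it is correct; but be aware that the paper does not prove this theorem at all: it is quoted as classical background from \cite{Dou,Lav,Shi}. The only proof-like content in the paper for the autonomous case is Lemma 2, which treats the special case $f(z)=z/(1-z)$ with all $\e_i=\e$ equal by writing the composition as a product of M\"obius matrices and computing the entries explicitly through Chebyshev polynomials ($p_k=\sin(k\theta)/\sin\theta$ with $2\cos\theta=2-\e^2$). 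So your route is genuinely different: the Fatou-coordinate machinery addresses the general $f(z)=z+z^2+O(z^3)$, but its core steps --- locally uniform convergence of the perturbed Fatou coordinates and the transit estimate $\gamma(\e)=-\pi/\e+o(1)$ --- are exactly the nontrivial implosion estimates, which you correctly identify as the heart of the matter and leave to the literature, so your text is a correct outline rather than a self-contained proof. The paper's computation, by contrast, is completely elementary and quantitative (an explicit $O(1/N)$ error), but it only works for the M\"obius model; that is precisely the case the rest of the paper generalizes to non-autonomous sequences via orthogonal-polynomial estimates.

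One inaccuracy worth correcting: your claim that for general $f$ the hypothesis needs a $\log\e$ correction governed by the iterative residue, vanishing for $z/(1-z)$, is not right for this particular perturbation $f_\e=f+\e^2$ with $\e>0$ real. The fixed points split symmetrically, $\a_\e^{\pm}=\pm i\e+O(\e^2)$, and the residue contributions from the two ends of the gate cancel: for $f(z)=z+z^2+az^3+\ldots$ one finds $\log\lambda_\pm=\pm 2i\e+2(1-a)\e^2+O(\e^3)$, hence
\begin{equation*}
\frac{\pi i}{\log\lambda_+}-\frac{\pi i}{\log\lambda_-}=\frac{\pi}{\e}+O(\e),
\end{equation*}
with no $\log\e$ term and not even a residue-dependent constant; the same cancellation occurs in your flow-model integral because the cubic correction is odd across the gate. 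This is why the classical statement, already for $z+z^2+\e^2$ (whose iterative residue is $1\neq 0$), reads exactly $N_\e-\pi/\e\to\sigma$; logarithmic corrections occur in other parametrizations (for instance via the multiplier at a single fixed point, or for complex perturbation directions), not here. The slip does not break your argument --- the caveat is simply unnecessary --- but the vanishing of the residue for $z/(1-z)$ is not the reason the theorem applies in the paper's setting.
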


In this paper we study the following question.

\begin{question*} Given $f$ be defined in a neighborhood $V$ of the origin and be of the form $f(z) = z + z^2 +O(z^3)$ as above. Consider different perturbations of $f$ as follows: given $\e_k>0$ let $f_k(z) := f(z) + \e_k^2$. 
Under which conditions on $\e_1,\e_2,\ldots,\e_N$ do we obtain:
\begin{align}\label{nonauto}
f_N \circ f_{N-1} \circ \ldots f_2\circ f_1 \to \mathcal{L}_f,
\end{align}
where $\mathcal{L}_f$ is the Lavaurs map of $f$?
\end{question*}
When \eqref{nonauto} holds we say that `non-autonomous bifurcation' holds for $f$.

\strut

In this paper we focus on the case of $f(z) =\frac{z}{1-z}$ and prove a sufficient and necessary condition on $\e_n$ for the following:

\begin{theorem}\label{maintheoremintro} Fix $N$ large. Consider $\{\e_k\}, 1 \leq k \leq N$ a sequence such that:
\begin{align}
\e_k = \frac{\pi}{N} + \frac{\alpha(k)}{N^2}+O\left(\frac{1}{N^3}\right)
\end{align}
where $\alpha(k)$ are bounded and $\alpha(k)+\alpha(N-k) =O(1/N)$. 
Let $f_k(z) := \frac{z}{1-z}+\e_k^2$. Then we have:
\begin{align}\label{nonautotheo}
f_N \circ f_{N-1} \circ \ldots f_2\circ f_1 \to \textrm{Id},
\end{align}
for all $z\in K$, where $K$ compact subset of $\CC$.
\end{theorem}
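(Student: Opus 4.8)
\emph{Plan of proof.} Set $h=\pi/N$ and change coordinates by $v=h/z$ (i.e. $z\mapsto 1/z$ followed by dilation by $h$). In this coordinate $f_0$ becomes $v\mapsto v-h$ and each $f_k$ becomes the M\"obius transformation of $M_k=\begin{pmatrix}1&-h\\ \e_k^2/h&1-\e_k^2\end{pmatrix}\in\mathrm{SL}_2(\CC)$ (its determinant is $1-\e_k^2+\e_k^2=1$). Thus $f_N\circ\cdots\circ f_1$ is the M\"obius map of $P_N:=M_N\cdots M_1=\begin{pmatrix}a_N&b_N\\ c_N&d_N\end{pmatrix}$, and conjugating back, $f_N\circ\cdots\circ f_1$ itself has (normalised) matrix $\begin{pmatrix}d_N& hc_N\\ b_N/h& a_N\end{pmatrix}$. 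The first thing I would record is therefore that \eqref{nonautotheo} — convergence to $\mathrm{Id}$ uniformly on compacts of $\CC$ — is equivalent to $a_N,d_N\to -1$, $hc_N\to 0$, and the only delicate point, $b_N=o(h)=o(1/N)$.

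Next I would exploit the parabolic structure. The map $v\mapsto v-h$ is the Euler step of the field $\dot v=-(1+v^2)$ on $\PP^1$, whose flow $\exp(t\xi)$, $\xi=\begin{pmatrix}0&-1\\1&0\end{pmatrix}$, has time-$\pi$ map the identity: $\exp(\pi\xi)=-\mathrm{Id}$. Writing $\Phi=\exp(h\xi)$ we get $\Phi^N=\exp(Nh\,\xi)=\exp(\pi\xi)=-\mathrm{Id}$, exactly. Using $\e_k=h+\alpha(k)h^2/\pi^2+O(h^3)$ uniformly in $k$, a Taylor expansion factors $M_k=\Phi R_k$ with $R_k=\mathrm{Id}+h^2F_k+O(h^3)=\exp(h^2F_k+O(h^3))$, where $F_k=\tfrac12H+\tfrac{2\alpha(k)}{\pi^2}E$, $H=\mathrm{diag}(1,-1)$, $E=\begin{pmatrix}0&0\\1&0\end{pmatrix}$. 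Pushing all the $\Phi$'s to the left gives $P_N=\Phi^N\prod_{j=N}^{1}\widetilde R_j=-\prod_{j=N}^{1}\widetilde R_j$ with $\widetilde R_j=\Phi^{-(j-1)}R_j\Phi^{\,j-1}=\exp(h^2\widetilde F_j+O(h^3))$, $\widetilde F_j=\mathrm{Ad}_{\Phi^{-(j-1)}}F_j$. Since the exponents have total size $O(Nh^2)=O(h)\to0$, the Magnus/BCH series converges and $\log\!\big(\prod_{j=N}^{1}\widetilde R_j\big)=h^2\sum_j\widetilde F_j+(\text{higher terms})$, where the commutator terms and the accumulated $O(h^3)$'s each contribute only $O(1/N^2)$ (an order-$p$ commutator carries $h^{2p}$ against $O(N^p)$ bounded brackets).

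The crux is then $h^2\sum_j\widetilde F_j$. Using $\mathrm{Ad}_{\exp(t\xi)}H=\cos(2t)H+\sin(2t)H''$ and $\mathrm{Ad}_{\exp(t\xi)}E=\tfrac12\xi-\tfrac{\sin 2t}{2}H+\tfrac{\cos 2t}{2}H''$ (with $H''=\begin{pmatrix}0&1\\1&0\end{pmatrix}$), and the exact vanishing $\sum_{j=1}^{N}\cos\tfrac{2(j-1)\pi}{N}=\sum_{j=1}^{N}\sin\tfrac{2(j-1)\pi}{N}=0$ (real and imaginary parts of a complete sum of $N$-th roots of unity), the contribution of the $\tfrac12H$-part of $F_j$ cancels identically and
\[
h^2\sum_{j=1}^{N}\widetilde F_j=\frac{1}{N^2}\Big[\Big(\textstyle\sum_j\alpha(j)\Big)\,\xi+\Big(\textstyle\sum_j\alpha(j)\sin\tfrac{2(j-1)\pi}{N}\Big)H+\Big(\textstyle\sum_j\alpha(j)\cos\tfrac{2(j-1)\pi}{N}\Big)H''\Big].
\]
Here the hypotheses on $\alpha$ enter: boundedness together with $\alpha(k)+\alpha(N-k)=O(1/N)$ gives, on pairing $j\leftrightarrow N-j$, both $\sum_j\alpha(j)=O(1)$ and $\sum_j\alpha(j)\cos\tfrac{2(j-1)\pi}{N}=O(1)$ (the antisymmetry of $\alpha$ meeting the symmetry of $\cos$ under $j\mapsto N-j$), while $\sum_j\alpha(j)\sin\tfrac{2(j-1)\pi}{N}$ need only be $O(N)$. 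Consequently $h^2\sum_j\widetilde F_j=\tfrac{\gamma_N}{N}H+O(1/N^2)$ with $\gamma_N=O(1)$ — a multiple of the \emph{diagonal} $H$ up to $O(1/N^2)$ — so $\prod_{j=N}^{1}\widetilde R_j=\mathrm{Id}+\tfrac{\gamma_N}{N}H+O(1/N^2)$, hence $P_N=-\mathrm{Id}-\tfrac{\gamma_N}{N}H+O(1/N^2)$: the diagonal entries go to $-1$, $c_N=O(1/N^2)$, and the point is $b_N=O(1/N^2)=o(1/N)$, since the only matrices ($\xi$ and $H''$) that could have loaded the $(1,2)$-entry at order $1/N$ now enter with coefficients $O(1)/N^2$.

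Feeding $a_N,d_N\to-1$, $hc_N\to0$, $b_N=o(1/N)$ back into the matrix $\begin{pmatrix}d_N& hc_N\\ b_N/h& a_N\end{pmatrix}$ of $f_N\circ\cdots\circ f_1$ shows it tends to $-\mathrm{Id}$ in $\mathrm{SL}_2(\CC)$, i.e. $f_N\circ\cdots\circ f_1(z)=\tfrac{d_Nhz+h^2c_N}{b_Nz+a_Nh}\to z$ uniformly on compacts of $\CC$, which is \eqref{nonautotheo}. I expect the main difficulty to be carrying out the third step cleanly: fixing the $\mathrm{sl}_2$-bookkeeping so that the two conditions on $\alpha$ bear exactly on the one sensitive matrix entry, and verifying that every discarded term (the higher Magnus terms, the $O(1/N^3)$ tail of $\e_k$, and the errors in the factorisation $M_k=\Phi R_k$) really is $o(1/N)$ uniformly in $k$ and $N$. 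As a sanity check, for $\alpha(k)\equiv\alpha_0$ constant — which violates the symmetry hypothesis — the same computation gives $b_N\sim\alpha_0/N$ and limit $z\mapsto z/(1-(\alpha_0/\pi)z)$, the Lavaurs map of $f_0$ with phase $\alpha_0/\pi$, in agreement with the one-dimensional picture and confirming that the hypothesis is sharp.
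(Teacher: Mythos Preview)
Your argument is correct and takes a genuinely different route from the paper's.

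The paper proceeds via orthogonal-polynomial machinery: it writes the matrix entries of $F_n$ through a three-term recurrence $p_{k+1}=t_kp_k-p_{k-1}$ (a perturbed Chebyshev recursion), invokes a formula of Nevai,
\[
\sin(\theta)\,p_n(x)=|\phi_n|\sin\!\big(n\theta-\arg\phi_n\big),\qquad \phi_n=1+\sum_{j<n}a_jp_je^{ij\theta},
\]
and bootstraps $|p_n-U_n|$ against the classical Chebyshev $U_n=\sin(n\theta)/\sin\theta$. The symmetry hypothesis on $\alpha$ enters through the weighted sum $\sum_k a_kU_k^2$, using $U_k=U_{N-k}$ at $\theta=\pi/N$. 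This yields a technical criterion (the paper's Theorem~3) from which Theorem~2 is read off.

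Your approach is Lie-theoretic: after the conjugation $v=h/z$ you factor $M_k=\Phi R_k$ with $\Phi=\exp(h\xi)$, use the \emph{exact} identity $\Phi^N=-\mathrm{Id}$, and reduce to a product of near-identity elements whose BCH/Magnus log is $h^2\sum_j\mathrm{Ad}_{\Phi^{-(j-1)}}F_j$ to leading order. The decomposition $\mathfrak{sl}_2=\RR\xi\oplus\RR H\oplus\RR H''$ with $\mathrm{ad}_\xi$ rotating the $H,H''$-plane makes the role of the hypotheses transparent: the exact root-of-unity cancellation kills the $\tfrac12 H$-part, and the antisymmetry $\alpha(k)+\alpha(N-k)=O(1/N)$ kills (to order $1/N^2$) precisely the $\xi$- and $H''$-components --- the ones that would load the dangerous off-diagonal entry $b_N$ --- leaving only a bounded multiple of $H/N$. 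Your error bookkeeping (each $p$-th Magnus term is $O(N^p h^{2p})=O(N^{-p})$, the accumulated $O(h^3)$ tails give $O(Nh^3)=O(N^{-2})$) is sound, and the sanity check $\alpha\equiv\alpha_0$ recovers $z\mapsto z/(1-(\alpha_0/\pi)z)$, matching the paper's counterexample ($\alpha_0=-\pi$ gives $z/(1+z)$).

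What each buys: the paper's route plugs directly into existing asymptotics for orthogonal polynomials and, as the paper itself notes, points toward perturbations of the form $p_{k+1}=(x+a_k)p_k-(b_k/b_{k-1})p_{k-1}$ (multiplicative as well as additive). Your route is more structural: it explains \emph{why} the two conditions on $\alpha$ are exactly the right ones (they are the conditions that the first Magnus term lands in the Cartan direction), and it would adapt with no change of idea to other one-parameter subgroups of $\mathrm{SL}_2$ or to perturbations lying elsewhere in $\mathfrak{sl}_2$. One cosmetic remark: the sentence ``$v\mapsto v-h$ is the Euler step of $\dot v=-(1+v^2)$'' is not literally true (the Euler step is $v\mapsto v-h-hv^2$), but this is only motivation and plays no role in the actual argument, which rests on $\Phi=\exp(h\xi)$.
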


We call this result `non-autonomous bifurcation' for a M\"obius transformation, since in this case the Lavaurs map of $f$ is simply the identity. We prove also that the condition is necessary (see Section \ref{necessaryconditions})


\begin{remark}
McMullen studied classic bifurcations for general maps by focusing on M\"obius transformations. In \cite{McM} he proves that, even when a general parabolic map $f$ is not analytically conjugated to the map $T(z)= z+1$ (consider the fixed point here to be infinity), we can still find a quasiconformal conjugacy $\phi$. Even more, if we have a sequence of maps $f_n \to f$, under certain conditions, there exists a sequence of quasi-conformal maps $\phi_n$ such that $f_n$ is conjugated by $\phi_n$ to $T_n(z) = \l_n(z+1)$ and $\phi_n \to \phi$ as well as $T_n \to T$ (see Theorem 8.2 in \cite{McM}). Using these facts, and proving parabolic bifurcation for $T_n$ (that is that $T_n^M$ has a limit under certain conditions on the relationship between $M$ and $n$), McMullen proves that $(f_n)^M$ will also have a limit. However, when we try to apply the same ideas in our setting, we do not obtain a limit for a general limit for $f_N\circ\ldots f_1$ since the quasiconformal conjugacy can vary for each $f_N$. It would be interesting to see if we still could obtain non-autonomous bifurcation for a general parabolic $f$ using the theory of quasiconformal mappings as done by McMullen. 
\end{remark}

\begin{remark}
In order to prove our main result we use the theory of orthogonal functions. This is to our knowledge the first time that we have a connection to this field. More importantly, we only use a particular version of a general estimate for orthogonal polynomials. We believe that the general version of this theorem must have its corresponding bifurcation version. See Section 2.1 for more details.
\end{remark}

The structure of the paper is as follows. In the next section we set the notation and well as our theorems and the proofs. In section 3, we give examples of sequences that satisfying the conditions. In section 4 we apply our results to prove parabolic bifurcation for specific families of maps in two dimensions. In the last section, we formulate some questions and remarks.\\

\textit{Acknowledgements} The author would like to thank Han Peters for comments in a preliminary version of the paper. Part of this work was done while the author visited Oberwolfach for the workshop Geometric Methods of Complex Analysis in August 2018. Thanks to this institution and the organizers for great working conditions.
. 

\section{Non-Parabolic Bifurcation}

Given a sequence of positive real numbers $\e_1,\e_2,\ldots$. Consider the following functions
$$
f_k (z) = f_{\e_k}(z) = \frac{z}{1-z} + \e_k^2,
$$
for $k\geq 1$.
Set
$$
F_n = f_n\circ f_{n-1}\circ \ldots f_2\circ f_1.
$$
 
We prove the following technical version of our theorem. We see at the end of this section how Theorem \ref{maintheorem} implies Theorem \ref{maintheoremintro}.
\begin{theorem}\label{maintheorem} Fix $N$ large. Consider $\{\e_k\}, 1 \leq k \leq N$ a sequence such that:
\begin{align}
\left|\sum_{k=1}^{N} \left(\frac{\pi^2}{N^2}-\e_k^2\right)\frac{\sin{(k\pi/N})^2}{\sin(\pi/N)^2} \right| &< \frac{A}{N}
\end{align}
and
\begin{align}
\left|\frac{\pi^2}{N^2}- \e_k^2 \right| &< \frac{A}{N^3}\nonumber
\end{align}
for all $1 \leq k \leq N$ and a fixed constant $A$ independent of $N$.
Then:
$$
|F_N(z) - z| < C/N
$$
for all $z\in K$, where $K$ compact subset of $\CC$ and some $C$ independent of $N$.
\end{theorem}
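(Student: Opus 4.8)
\emph{Step 1: reduction to a recurrence.} The plan is to convert $F_N$ into a scalar linear three-term recurrence, recognize its constant-coefficient model as an exactly solvable parabolic-bifurcation model, and control the difference by a discrete variation-of-parameters argument in which the first hypothesis supplies the decisive cancellation. To begin, represent $f_k$ by $M_k=\bigl(\begin{smallmatrix}1-\e_k^2 & \e_k^2\\ -1 & 1\end{smallmatrix}\bigr)\in SL_2(\CC)$, so that $F_N$ is represented by $M_N\cdots M_1$. Writing $\bigl(\begin{smallmatrix}p_n\\ q_n\end{smallmatrix}\bigr)=M_n\bigl(\begin{smallmatrix}p_{n-1}\\ q_{n-1}\end{smallmatrix}\bigr)$, the relation $q_n=-p_{n-1}+q_{n-1}$ gives $p_{n-1}=q_{n-1}-q_n$, and eliminating the $p$'s leaves $q_{n+1}=(2-\e_n^2)q_n-q_{n-1}$ for $n\geq1$. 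Propagating the two standard basis vectors (and using $p_n=q_n-q_{n+1}$) then gives
\[
M_N\cdots M_1=\begin{pmatrix}U_N-U_{N+1} & V_N-V_{N+1}\\ U_N & V_N\end{pmatrix},
\]
where $U,V$ solve the recurrence with $(U_0,U_1)=(0,-1)$ and $(V_0,V_1)=(1,1)$. The Casoratian $U_nV_{n+1}-U_{n+1}V_n$ is constant, equal to $1$ (this is $\det F_N=1$). Since $-I$ represents the identity M\"obius map and this matrix does not involve $z$, it suffices to prove that its four entries match those of $-I$ up to $O(1/N)$; uniformity on a compact $K\subset\CC$ is then automatic, the pole of $F_N$ lying at distance of order $N$ from the origin.

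\emph{Step 2: the model and the Duhamel formula.} Set $\bar\e^2:=4\sin^2(\pi/2N)=\pi^2/N^2+O(1/N^4)$ and consider the constant-coefficient recurrence $\widetilde q_{n+1}=(2-\bar\e^2)\widetilde q_n-\widetilde q_{n-1}$, whose characteristic roots are $e^{\pm i\pi/N}$. With $\phi_m:=\sin(m\pi/N)/\sin(\pi/N)$, the model solutions matching the data of Step 1 are $\widetilde U_n=-\phi_n$ and $\widetilde V_n=\cos(n\pi/N)+\tan(\pi/2N)\sin(n\pi/N)$, and a short computation (using $\phi_N=0$, $\phi_{N+1}=-1$) shows that the associated matrix is \emph{exactly} $-I$: $N$ near-rotations by $\pi/N$ compose to a half-turn, which is the identity in the parabolic coordinate. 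Writing the true recurrence as this model driven by the forcing $(\bar\e^2-\e_n^2)q_n$, the discrete variation-of-parameters formula reads
\[
q_n=q_n^{\mathrm{hom}}+\sum_{j=1}^{n-1}\phi_{n-j}\,(\bar\e^2-\e_j^2)\,q_j ,
\]
with $q^{\mathrm{hom}}$ the model solution matching the initial data. The operator $(\mathcal Tu)_n=\sum_{j<n}\phi_{n-j}(\bar\e^2-\e_j^2)u_j$ has sup-norm $O(1/N)$, because $|\phi_m|\leq CN$ and the pointwise hypothesis $|\pi^2/N^2-\e_j^2|<A/N^3$ gives $|\bar\e^2-\e_j^2|\leq C/N^3$; hence $q=(I-\mathcal T)^{-1}q^{\mathrm{hom}}=\sum_{m\geq 0}\mathcal T^m q^{\mathrm{hom}}$.

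\emph{Step 3: the estimates.} Using $\phi_N=0$, $\phi_{N-j}=\phi_j$ and $\phi_{N+1-j}=\phi_{j-1}$, the contributions with $m\leq1$ to the four entries are: for $U_N$, the sum $-\sum_j\phi_j^2(\bar\e^2-\e_j^2)$; for the $(1,1)$ entry, $-1+\sum_j\phi_{j-1}\phi_j(\bar\e^2-\e_j^2)$; for $V_N$, $-1+\sum_j\phi_j(\bar\e^2-\e_j^2)\widetilde V_j$; for the $(1,2)$ entry, $\sum_j\phi_{j-1}(\bar\e^2-\e_j^2)\widetilde V_j$. The first hypothesis $\big|\sum_k(\pi^2/N^2-\e_k^2)\phi_k^2\big|<A/N$ (with $\bar\e^2-\pi^2/N^2=O(1/N^4)$ and $\sum_k\phi_k^2=O(N^3)$) shows $\sum_j\phi_j^2(\bar\e^2-\e_j^2)=O(1/N)$; since $\phi_{j-1}\phi_j=\phi_j^2+O(N)$, the same bound holds for the $(1,1)$ sum. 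In the remaining two sums $\widetilde V$ is bounded, so $|\phi_j\widetilde V_j|,|\phi_{j-1}\widetilde V_j|\leq CN$ and the crude estimate $\sum_j CN\cdot C/N^3=O(1/N)$ suffices. Finally $\|\mathcal T^m q^{\mathrm{hom}}\|\leq\|\mathcal T\|^m\|q^{\mathrm{hom}}\|=O(N^{1-m})$, so the $m\geq2$ tail is $O(1/N)$. Assembling, $M_N\cdots M_1=-I+O(1/N)$, hence $|F_N(z)-z|<C/N$ on $K$. (Alternatively, one may invoke the comparison estimate for orthogonal polynomials with perturbed recursion coefficients referred to in the introduction, which encapsulates exactly this computation.)

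\emph{Main obstacle.} I expect the crux to be Step 3: isolating the single genuinely delicate sum, $\sum_j\phi_j^2(\bar\e^2-\e_j^2)$, while keeping every other error at $O(1/N)$. Without the cancellation built into the first hypothesis this sum is only $O(1)$ — the weight $\phi_j^2$ reaches order $N^2$ while $|\bar\e^2-\e_j^2|$ is only $O(1/N^3)$, over $N$ summands — which is exactly why that hypothesis is necessary. The work therefore lies in reducing the other entries' leading sums to (or past) that one sum with errors that do not spoil its cancellation, in verifying convergence of the Neumann/Picard expansion with the crude bounds, and in the trigonometric bookkeeping ($\phi_N=0$, $\phi_{N-j}=\phi_j$, $\phi_{j-1}\phi_j=\phi_j^2+O(N)$).
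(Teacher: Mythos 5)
Your argument is correct, and at its core it runs on the same two facts as the paper: comparison of the non-autonomous recurrence with the Chebyshev model at angle $\pi/N$, and the use of the first hypothesis to produce cancellation in the one dangerous sum $\sum_j \phi_j^2(\bar\e^2-\e_j^2)$, with the pointwise bound $|\pi^2/N^2-\e_j^2|<A/N^3$ absorbing everything else. The packaging, however, is genuinely different in three respects. First, your discrete variation-of-parameters identity $q_n=q_n^{\mathrm{hom}}+\sum_{j<n}\phi_{n-j}(\bar\e^2-\e_j^2)q_j$ is exactly the identity the paper imports from Nevai's book on orthogonal polynomials ($\sin\theta\,p_n=|\phi_n|\sin(n\theta-\arg\phi_n)$ with $\phi_n=1+\sum a_jp_je^{ij\theta}$), so your route is self-contained and more elementary, at the cost of not displaying the link to orthogonal polynomials the paper emphasizes. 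Second, your Neumann/Volterra series $\sum_m\mathcal{T}^m q^{\mathrm{hom}}$ with $\|\mathcal{T}\|=O(1/N)$ replaces the paper's induction lemma bounding $|p_i-U_i|\le 2C$ before feeding $p_j$ back into the perturbation sum; the two bootstraps are equivalent in strength. Third, and most substantively, you treat the second column directly: since the homogeneous solution $\widetilde V_n=\cos(n\pi/N)+\tan(\pi/2N)\sin(n\pi/N)$ is bounded, the crude estimate already gives $O(1/N)$ there, with no cancellation needed; the paper instead decomposes $q_k=p_k-\tp_{k-1}$ and reruns the cancellation argument for the shifted sequence (its Proposition 3), which tacitly requires checking that $\sum_k a_{k+1}U_k^2$ inherits the $O(1/N)$ bound from $\sum_k a_kU_k^2$ — a step your route simply avoids. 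Your observation that with $\bar\e^2=4\sin^2(\pi/2N)$ the model product is exactly $-I$ is also a clean simplification of the paper's asymptotic evaluation of $U_N,U_{N\pm1}$. Two harmless bookkeeping points: the sums you list for the $(1,1)$ and $(1,2)$ entries are really the first-order contributions of $-U_{N+1}$ and $-V_{N+1}$ alone (the full entries also contain the already-listed $U_N$, $V_N$ pieces), but the bounds you invoke cover both pieces; and the comparison $\phi_{j-1}\phi_j=\phi_j^2+O(N)$ together with $N\cdot O(N)\cdot O(1/N^3)=O(1/N)$ is exactly the paper's implicit step relating $\sum a_kU_kU_{k-1}$ to $\sum a_kU_k^2$, so nothing is missing there.
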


(Note that when $\e_k=\e=\pi/N$ then the conditions on the theorem are satisfied trivially. The conclusion that $\lim_{N \to\infty} (f_\e)^N(z) = z$ is a particular case of the classical bifurcation theorem in one dimension.)

Fix $N \geq 1$. Since each $f_n$ is a M\"oebius transformation then we can compute the specific formula for $F_k$ by computing the product of the matrices related to each.

\begin{equation}
F_k(z) = \frac{A_k z+C_k}{B_k z+D_k},
\end{equation}
then:
$$
\left({\begin{array}{cc} A_k & C_k\\ B_k & D_k \end{array}}\right) = 
\left({\begin{array}{cc} 1-\e_k^2 & \e_k^2\\ -1 & 1 \end{array}}\right) \left({\begin{array}{cc} A_{k-1} & C_{k-1}\\ B_{k-1} & D_{k-1} \end{array}}\right) .
$$

\begin{lemma} Set $t_k=2-\e_k^2$ the trace of each matrix $A_k$.
Consider the sequence $p_{0}=0, p_1=1$ and $q_{0}=1,q_1=1$ and for $k\geq 1$:
\begin{align}\label{gencheb}
p_{k+1}=t_{k}p_k-p_{k-1}\\
q_{k+1}=t_{k}q_k-q_{k-1}. \nonumber
\end{align}
Then for any $n\geq 1$ we have:
\begin{align*}
A_n &=p_{n+1}-p_{n},  C_n =q_{n}-q_{n+1}\\
B_n &=-p_{n},  \qquad D_n =q_{n}.
\end{align*}
\end{lemma}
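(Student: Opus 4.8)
The plan is to proceed by induction on $n$. The base case $n=1$ is a direct check: from the given recursions we have $p_2 = t_1 p_1 - p_0 = t_1 = 2-\e_1^2$ and $q_2 = t_1 q_1 - q_0 = t_1 - 1 = 1-\e_1^2$, so $p_2 - p_1 = 1-\e_1^2 = A_1$, $q_1 - q_2 = 1 - (1-\e_1^2) = \e_1^2 = C_1$, $-p_1 = -1 = B_1$, and $q_1 = 1 = D_1$, matching the entries of the first matrix $\left(\begin{smallmatrix} 1-\e_1^2 & \e_1^2 \\ -1 & 1 \end{smallmatrix}\right)$. (Here I would just write these out in an \texttt{align*} without blank lines.)

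For the inductive step, assume the claimed formulas hold for $n=k-1$. Using the matrix recursion
\begin{equation*}
\left(\begin{array}{cc} A_k & C_k \\ B_k & D_k \end{array}\right) = \left(\begin{array}{cc} 1-\e_k^2 & \e_k^2 \\ -1 & 1 \end{array}\right)\left(\begin{array}{cc} A_{k-1} & C_{k-1} \\ B_{k-1} & D_{k-1} \end{array}\right),
\end{equation*}
I would compute each of the four entries. For instance $B_k = -A_{k-1} + B_{k-1} = -(p_k - p_{k-1}) + (-p_{k-1}) = -p_k$, and $D_k = -C_{k-1} + D_{k-1} = -(q_{k-1}-q_k) + q_{k-1} = q_k$, which already confirm two of the four identities with no use of the recursion \eqref{gencheb}. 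For the remaining two, $A_k = (1-\e_k^2)A_{k-1} + \e_k^2 B_{k-1} = (1-\e_k^2)(p_k - p_{k-1}) + \e_k^2(-p_{k-1}) = (1-\e_k^2)p_k - p_{k-1} - \e_k^2 p_k + \e_k^2 p_{k-1}$; I want this to equal $p_{k+1} - p_k = t_k p_k - p_{k-1} - p_k = (1-\e_k^2)p_k - p_{k-1}$. Comparing, the leftover terms are $-\e_k^2 p_k + \e_k^2 p_{k-1} = -\e_k^2(p_k - p_{k-1}) = -\e_k^2 A_{k-1}$, and indeed $A_k = (1-\e_k^2)A_{k-1} + \e_k^2 B_{k-1}$ with $B_{k-1} = -p_{k-1}$ gives exactly $-\e_k^2 A_{k-1} + \e_k^2 B_{k-1}$... so I would instead simply substitute $t_k = 2-\e_k^2$ into $p_{k+1} = t_k p_k - p_{k-1}$ and match term by term; the same bookkeeping handles $C_k$ via $q_{k+1} = t_k q_k - q_{k-1}$.

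There is no real obstacle here — the statement is a clean reindexing of the matrix product as a three-term recurrence, and the only care needed is keeping the index shifts ($p_{n+1}$ versus $p_n$, and the initial conditions $p_0=0,\,q_0=1$ versus $p_1=q_1=1$) consistent so that the base case lines up with the very first factor rather than an empty product. I would present it compactly: one displayed line establishing the base case, then a single display carrying out the four entrywise multiplications with the inductive hypothesis substituted, followed by the substitution $t_k = 2-\e_k^2$ to recognize the Chebyshev-type recursions \eqref{gencheb}. The payoff, which motivates the lemma, is that $F_n(z) - z = \frac{A_n z + C_n - z(B_n z + D_n)}{B_n z + D_n}$ has numerator $-B_n z^2 + (A_n - D_n)z + C_n = p_n z^2 + (p_{n+1} - p_n - q_n)z + (q_n - q_{n+1})$, reducing the whole problem to asymptotics of the scalar sequences $p_n, q_n$, which is where the orthogonal-polynomial estimates enter.
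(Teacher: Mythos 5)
Your proposal is correct and matches the paper's proof, which is exactly the same induction on $n$ using the matrix recursion (the paper simply states ``it follows directly using induction''). The only blemish is the garbled intermediate expansion of $A_k$ (the terms $-\e_k^2 p_k + \e_k^2 p_{k-1}$ are miscopied), but since $(1-\e_k^2)(p_k-p_{k-1})-\e_k^2 p_{k-1} = (1-\e_k^2)p_k - p_{k-1} = p_{k+1}-p_k$ by the substitution $t_k=2-\e_k^2$ you fall back on, the argument goes through as you describe.
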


\begin{proof}[Proof of Lemma]
it follows directly using induction. 
\end{proof}

Although the following statement is, as mentioned above a particular case of the general parabolic bifurcation in one variable, we redo the proof here as a preparation step for the proof of Theorem \ref{maintheorem}.

\begin{lemma} Fix $N$. Suppose that all $\e_i = \e$ and the condition:
\begin{align*}
N-\pi/\e \to 0
\end{align*}
Then $F_N(z) \to \textrm{Id}$.
\end{lemma}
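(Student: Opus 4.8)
The plan is to exploit the explicit M\"obius/matrix description together with the recursion for $p_k, q_k$ from the previous lemma. When all $\e_i = \e$, the trace is constant, $t_k = t = 2 - \e^2$, so the recursions $p_{k+1} = t p_k - p_{k-1}$ and $q_{k+1} = t q_k - q_{k-1}$ are linear with constant coefficients; they are solved by Chebyshev-type expressions. First I would write $t = 2\cos\theta$ where $\cos\theta = 1 - \e^2/2$, so that $\theta = \e + O(\e^3)$ as $\e \to 0$. Then I would solve the two recursions with their stated initial data $p_0 = 0, p_1 = 1$ and $q_0 = 1, q_1 = 1$, getting closed forms in terms of $\sin(k\theta)$ and $\cos(k\theta)$; explicitly $p_k = \dfrac{\sin(k\theta)}{\sin\theta}$, and $q_k$ a suitable combination, say $q_k = \dfrac{\sin(k\theta) + (\text{const})(\cos(k\theta) - \cos((k-1)\theta))}{\sin\theta}$ up to sorting out the $q_0 = q_1 = 1$ normalization.

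Next I would substitute these into the formulas $A_n = p_{n+1} - p_n$, $B_n = -p_n$, $C_n = q_n - q_{n+1}$, $D_n = q_n$ from the Lemma to get $F_N(z) = \dfrac{A_N z + C_N}{B_N z + D_N}$ entirely in terms of $\sin(N\theta)$, $\sin((N\pm 1)\theta)$, $\cos(N\theta)$, etc., all divided by $\sin\theta$. The key point is the hypothesis $N - \pi/\e \to 0$: since $\theta = \e(1 + O(\e^2))$ and $\e \to 0$ (forced because $N \to \infty$), one gets $N\theta = N\e(1 + O(\e^2)) = \pi + O(\e) \to \pi$. Thus $\sin(N\theta) \to \sin\pi = 0$, $\cos(N\theta) \to -1$, while $\sin((N+1)\theta) = \sin(N\theta)\cos\theta + \cos(N\theta)\sin\theta \to -\sin\theta$, and similarly $\sin((N-1)\theta) \to \sin\theta$. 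Feeding these limits into the expressions for $A_N, B_N, C_N, D_N$, the terms involving $\sin(N\theta)$ vanish in the limit and the surviving terms should collapse to $A_N \to 1$, $B_N \to 0$, $C_N \to 0$, $D_N \to 1$, whence $F_N(z) \to z$ uniformly on compact sets of $\CC$ avoiding any pole (and one checks $B_N z + D_N$ stays away from $0$).

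I would carry out the steps in this order: (1) diagonalize/solve the constant-coefficient recursion, recording the trigonometric closed forms for $p_k, q_k$; (2) translate into $A_N, B_N, C_N, D_N$; (3) use $N - \pi/\e \to 0$ together with $\theta = \e + O(\e^3)$ to compute the limits of each entry; (4) conclude $F_N \to \mathrm{Id}$ and note uniformity on compacta. The main obstacle I anticipate is bookkeeping the $q_k$ normalization cleanly: unlike $p_k$, the initial data $q_0 = q_1 = 1$ is not the "pure Chebyshev" data, so $q_k$ is a genuine linear combination of the two fundamental solutions and one must be careful that the constants are chosen so that $C_N = q_N - q_{N+1}$ really does tend to $0$ and $D_N = q_N$ to $1$; a useful consistency check is the determinant identity $A_N D_N - B_N C_N = 1$ (the product of matrices of determinant $1$), which constrains the limiting values and should make the limit $F_N \to \mathrm{Id}$ essentially automatic once the entrywise limits are pinned down. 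This computation is also exactly the template to be perturbed in the proof of Theorem \ref{maintheorem}, where $t_k$ is no longer constant and the Chebyshev recursion becomes the orthogonal-polynomial recursion alluded to in the remarks.
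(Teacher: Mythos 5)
Your proposal follows essentially the same route as the paper: write $t=2\cos\theta$ with $\theta=\e+O(\e^3)$, use the Chebyshev closed forms $p_k=\sin(k\theta)/\sin\theta$ (and $q_k=p_k-p_{k-1}$ for the initial data $q_0=q_1=1$), translate into $A_N,B_N,C_N,D_N$, and let $N\theta\to\pi$. One small correction: with this normalization the entrywise limits are $A_N\to-1$, $D_N\to-1$, $B_N,C_N\to0$ (the matrix tends to $-I$, consistent with determinant $1$), not $+I$ as you predicted; since $-I$ induces the identity M\"obius transformation, the conclusion $F_N\to\mathrm{Id}$ is unaffected.
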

\begin{proof}
The equation \eqref{gencheb} is a generalization of the classical Chebyshev polynomials. Note that the classical Chebyshev polynomial corresponds to the case of the same $\e_i$, that is, the classical parabolic bifurcation on one variable. Indeed, if we have all $\e_i=\e$ equals and $t_i=x=2-\e^2$, then it's well known that:
$$
p_k = \frac{\sin (k\theta)}{\sin(\theta)} \qquad\textrm{ and }q_k= \frac{\sin(k \theta)-\sin((k-1)\theta)}{\sin(\theta)},
$$
where $x=2\cos(\theta)$. When $x=2-\e^2$, then $\theta = \e + O(\e^3)$. Suppose $N-\pi/\e \to 0$. On that case $N-\pi/\e = o_N(1)$ and we can write $\theta = \frac{\pi}{N}+\frac{\pi o_N(1)}{N^2}+O\left(\frac{1}{N^3}\right)$. Then
\begin{align*}
p_N = \frac{\sin(N\theta)}{\sin(\theta)}=\frac{\sin\left(\pi + \frac{\pi o_N(1)}{N}+O\left(\frac{1}{N^2}\right)\right)}{\sin\left(\frac{\pi}{N}+\frac{\pi o_N(1)}{N^2}+O\left(\frac{1}{N^3}\right)\right)} = o_N(1)\\
p_{N+1} = \frac{\sin((N+1)\theta)}{\sin(\theta)}=\frac{\sin\left(\pi + \frac{\pi}{N}+O\left(\frac{\pi 0_N(1)}{N}\right)\right)}{\sin\left(\frac{\pi}{N}+\frac{\pi o_N(1)}{N^2}+O\left(\frac{1}{N^3}\right)\right)} = -1+ o_N(1)\\
\end{align*}
and similarly $p_{N-1} = 1+ o_N(1)$, which translated to the element of our matrix:
$$
A_N = D_N = -1+o_N(1), B_N = C_N = o_N(1). 
$$
Therefore when $N \to \infty$,  $A_N = D_N \to -1, B_N = C_N \to 0$ so $F_N(z) \to \textrm{Id}$.
\end{proof}

As it is clear from the proof of the lemma above, if we have estimates on $p_N$ and $q_N$ then we immediately have the estimates for $A_N,B_N,C_N,D_N$. 

\subsection{Orthogonal polynomials}
We review here some facts about orthogonal polynomials. We use the following lemma from \cite{Nev}. 

\begin{lemma}
Consider the sequence $p_{0}=0, p_1=1$ and for $k\geq 1$:
\begin{align}\label{generalnevai}
p_{k+1}=(x+a_k)p_k-p_{k-1}
\end{align}
Let $x=2\cos(\theta)$, then we have the following equality:
\begin{align}\label{mainformula}
\sin(\theta)p_n(x) = |\phi_{n}| \sin(n\theta - \arg(\phi_n))
\end{align}
where
$\phi_n = 1 +  \delta_n = 1+ \sum_{j=1}^{n-1}a_jp_je^{ij\theta}$ for $n\geq 2$ and $\phi_1=1$.
\end{lemma}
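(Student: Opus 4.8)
The plan is to first reduce the stated identity to a purely real three-term-recurrence identity, and then prove that identity by induction on $n$. Writing $\phi_n = |\phi_n|e^{i\arg\phi_n}$, one has
\[
|\phi_n|\sin\big(n\theta-\arg\phi_n\big) = \Imm\!\left(e^{in\theta}\,\overline{\phi_n}\right),
\]
and since the $a_j$ and $p_j$ are real, $\overline{\phi_n} = 1 + \sum_{j=1}^{n-1}a_j p_j e^{-ij\theta}$, hence $e^{in\theta}\overline{\phi_n} = e^{in\theta} + \sum_{j=1}^{n-1}a_j p_j e^{i(n-j)\theta}$. Taking imaginary parts, \eqref{mainformula} is equivalent to
\begin{align}\label{realform}
\sin(\theta)\,p_n(x) = \sin(n\theta) + \sum_{j=1}^{n-1} a_j p_j \sin\big((n-j)\theta\big),
\end{align}
which no longer mentions $\arg$, and which also sidesteps the minor annoyance that $\phi_n$ could vanish. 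One may note that the shape of \eqref{realform} is exactly what discrete variation of parameters predicts: $e^{\pm ik\theta}$ span the solution space of the unperturbed recurrence $u_{k+1}=xu_k-u_{k-1}$ (here $x=2\cos\theta$), and their Casoratian $e^{ik\theta}e^{-i(k-1)\theta}-e^{i(k-1)\theta}e^{-ik\theta}=2i\sin\theta$ is constant; this is the structural reason the formula takes this form.

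I would then establish \eqref{realform} by induction on $n$. The cases $n=0$ (both sides equal $0$) and $n=1$ (both sides equal $\sin\theta$, the sum being empty) are immediate. For the inductive step, assume \eqref{realform} for $n-1$ and $n$. Multiply $p_{n+1} = (x+a_n)p_n - p_{n-1}$ by $\sin\theta$ and substitute the two inductive hypotheses. To every resulting term carrying a factor of $x$, apply the product-to-sum identity $2\cos\theta\,\sin(m\theta) = \sin\big((m+1)\theta\big) + \sin\big((m-1)\theta\big)$. One then checks that the two $\sin\big((n-1)\theta\big)$ contributions cancel, that the two sums involving $\sin\big((n-1-j)\theta\big)$ differ only in their $j=n-1$ term, which is $a_{n-1}p_{n-1}\sin 0 = 0$, and that the leftover term $a_n\sin(\theta)p_n$ is precisely the missing $j=n$ summand $a_n p_n\sin(\theta)$, since $\sin\big((n+1-n)\theta\big)=\sin\theta$. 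Collecting the surviving terms gives exactly \eqref{realform} with $n$ replaced by $n+1$.

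There is no genuine obstacle here: the content is elementary trigonometry together with a two-step induction, which is appropriate since \eqref{mainformula} is an identity about a formal three-term recurrence. The only place that demands attention is the bookkeeping of summation ranges during the telescoping in the inductive step — the sum attached to $p_n$ runs to $n-1$, the one attached to $p_{n-1}$ runs to $n-2$, and the product-to-sum step shifts indices by $\pm 1$ — so one must keep careful track of which endpoint terms survive and which vanish.
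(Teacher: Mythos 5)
Your proof is correct. There is nothing in the paper to compare it with: the paper does not prove this lemma at all, but quotes it from Nevai \cite{Nev}, so your argument supplies a self-contained verification. Your reduction of \eqref{mainformula} to the real identity $\sin(\theta)\,p_n=\sin(n\theta)+\sum_{j=1}^{n-1}a_jp_j\sin\bigl((n-j)\theta\bigr)$, i.e.\ $\sin(\theta)\,p_n=\Imm\bigl(e^{in\theta}\overline{\phi_n}\bigr)$, is precisely the form in which the paper actually uses the lemma afterwards (in the next lemma's proof it expands $\sin(\theta)p_n=\sin(n\theta)(1+\Ree(\delta_n))-\cos(n\theta)\Imm(\delta_n)$, which is the same statement), so your version is consistent with all later computations; it also quietly repairs the small defect that $\arg(\phi_n)$ is undefined when $\phi_n=0$, a case in which both sides vanish anyway. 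The two-step induction is sound: the base cases $n=0,1$ are immediate, the product-to-sum identity $2\cos\theta\sin(m\theta)=\sin((m+1)\theta)+\sin((m-1)\theta)$ handles the $x$-terms, the two $\sin((n-1)\theta)$ contributions cancel, the mismatched endpoint term is $a_{n-1}p_{n-1}\sin 0=0$, and $a_np_n\sin\theta$ is exactly the new $j=n$ summand, yielding the identity at $n+1$. The only implicit hypothesis you use is that $\theta$ and the $a_j$ are real (needed for $\overline{\phi_n}=1+\sum_{j}a_jp_je^{-ij\theta}$), which holds in the paper's setting where $\theta=\pi/N$ and $a_k=2-\e_k^2-2\cos(\pi/N)$; it would be worth stating this hypothesis explicitly.
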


For simplification we will use the following terminology for the classical Chebyshev polynomials $U_{0}=0,U_1=1$ and for $k\geq 1, U_{k+1}=xU_k-U_{k-1}$. In that case $U_k = \sin(k\theta)/\sin(\theta)$ for $x=2\cos(\theta)$.

\begin{lemma}
Consider the following two sequences:
\begin{align*}
p_{0}=0, &p_1=1, p_{k+1}=(x+a_k)p_k-p_{k-1}, k\geq 1\\
U_{0}=0, &U_1=1, U_{k+1}=xU_k-U_{k-1}, k\geq 1
\end{align*}
Let $x=2\cos(\theta)$. Suppose there exists $\e>0$ and $m \in \NN$ such that the sequence $\{a_i\}$ satisfies:
\begin{align}
\sum_{j=1}^{m-1}|a_jp_j| \leq \e\sin(\theta)
\end{align} 
then 
\begin{align*}
|p_n-U_n| \leq \e,
\end{align*}
for all $1\leq n \leq m$.
\end{lemma}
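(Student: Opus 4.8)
The plan is to exploit the explicit formula \eqref{mainformula} from Nevai's lemma simultaneously for the sequence $p_n$ and for the Chebyshev sequence $U_n$, and to control the difference by controlling the phase/amplitude correction $\phi_n$. Applying \eqref{mainformula} to $p_n$ gives $\sin(\theta)p_n(x) = |\phi_n|\sin(n\theta - \arg\phi_n)$ with $\phi_n = 1 + \sum_{j=1}^{n-1} a_j p_j e^{ij\theta}$, while the same identity applied to $U_n$ (where all $a_j = 0$) gives simply $\sin(\theta)U_n(x) = \sin(n\theta)$. Subtracting, $\sin(\theta)(p_n - U_n) = |\phi_n|\sin(n\theta - \arg\phi_n) - \sin(n\theta)$. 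The first step is therefore to estimate $\phi_n - 1 = \delta_n = \sum_{j=1}^{n-1} a_j p_j e^{ij\theta}$; by the hypothesis $\sum_{j=1}^{m-1}|a_j p_j| \le \e\sin(\theta)$, we get $|\delta_n| \le \e\sin(\theta)$ for every $n \le m$, hence $||\phi_n| - 1| \le \e\sin(\theta)$ and, since $|\phi_n| \ge 1 - \e\sin(\theta)$ is bounded away from $0$ (for $\e\sin\theta < 1$), also $|\arg\phi_n| \le C|\delta_n| \le C\e\sin(\theta)$ for a universal constant $C$ close to $1$.

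The second step is the elementary trigonometric estimate: writing $|\phi_n|\sin(n\theta - \arg\phi_n) - \sin(n\theta) = (|\phi_n| - 1)\sin(n\theta - \arg\phi_n) + \bigl(\sin(n\theta - \arg\phi_n) - \sin(n\theta)\bigr)$, we bound the first term by $||\phi_n| - 1| \le \e\sin(\theta)$ and the second, using $|\sin a - \sin b| \le |a - b|$, by $|\arg\phi_n| \le C\e\sin(\theta)$. Thus $|\sin(\theta)(p_n - U_n)| \le (1+C)\e\sin(\theta)$, and dividing by $\sin(\theta) > 0$ yields $|p_n - U_n| \le (1+C)\e$ for all $1 \le n \le m$. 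To land exactly on the clean bound $|p_n - U_n| \le \e$ as stated, one should absorb the constant: either track the constants more carefully — near $\theta$ small and $\e\sin\theta$ small, $|\arg\phi_n| = \arctan(\Imm\delta_n/(1+\Ree\delta_n))$ is actually $\le |\delta_n|(1 + o(1))$, and a slightly more careful split gives the factor $1$ rather than $1+C$ — or simply note that $\delta_n$ can be incorporated into a single displacement of $\phi_n$ in the complex plane so that $|\phi_n e^{-i\arg\phi_n} - 1| = |\phi_n| - 1$ while the phase error and amplitude error are two legs of the same small vector $\delta_n$, giving the combined bound $|\phi_n|\sin(n\theta-\arg\phi_n) - \sin(n\theta)|\le |\delta_n|$ directly by viewing both as imaginary parts of $e^{in\theta}$ and $\phi_n e^{in\theta}$ respectively along the relevant direction.

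The main obstacle is precisely this last bookkeeping point: getting the constant down to exactly $1$ rather than an unspecified $O(1)$. The cleanest route is to observe that $|\phi_n|\sin(n\theta - \arg\phi_n) = \Imm\!\bigl(\phi_n e^{i(n\theta - 2\arg\phi_n)}\bigr)$ is not quite what we want; instead use $|\phi_n|\sin(n\theta - \arg\phi_n) = \Imm\!\bigl(\overline{\phi_n}\, e^{in\theta}\bigr)\cdot$(nothing) — more carefully, $|\phi_n| e^{-i\arg\phi_n} = \overline{\phi_n}$, so $|\phi_n|\sin(n\theta - \arg\phi_n) = \Imm\bigl(|\phi_n| e^{i(n\theta - \arg\phi_n)}\bigr) = \Imm\bigl(e^{in\theta}\,\overline{\phi_n}\bigr)$. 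Hence $\sin(\theta)(p_n - U_n) = \Imm\bigl(e^{in\theta}(\overline{\phi_n} - 1)\bigr) = \Imm\bigl(e^{in\theta}\,\overline{\delta_n}\bigr)$, which is bounded in absolute value by $|\delta_n| \le \e\sin(\theta)$. Dividing through by $\sin(\theta)$ gives $|p_n - U_n| \le \e$ exactly, for all $1 \le n \le m$, with no lost constant. Everything else — verifying the reduction of \eqref{gencheb} to \eqref{generalnevai} with $a_k = t_k - x$, and checking $U_n$ is the $a_k \equiv 0$ case — is routine.
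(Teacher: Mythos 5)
Your final argument is correct and is essentially the paper's own proof: the identity $\sin(\theta)\,(p_n-U_n)=\Imm\bigl(e^{in\theta}\,\overline{\delta_n}\bigr)=-\Imm\bigl(\delta_n e^{-in\theta}\bigr)$ followed by $|\delta_n|\leq\sum_{j=1}^{n-1}|a_jp_j|\leq\e\sin(\theta)$ is exactly the computation in the paper. The amplitude/phase detour in your first two paragraphs (which loses a constant) is unnecessary once you have that clean identity.
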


\begin{proof} We use equation \eqref{mainformula}:
\[
\sin(\theta)p_n(x) =  \sin(n\theta)(1+\Ree(\delta_n)) - \cos(n\theta)\Imm(\delta_n)
\]
\[
\sin(\theta)p_n(x) =  \sin(n\theta)+\sin(n\theta)\Ree(\delta_n) - \cos(n\theta)\Imm(\delta_n)
\]
\[
\sin(\theta)(p_n - U_n) = -\Imm(\delta_n e^{-in\theta})
\]
Recall that $\delta_n = \sum_{j=1}^{n-1}a_jp_je^{ij\theta}$ then $|\delta_n| \leq \sum_{j=1}^{n-1}|a_jp_je^{ij\theta}|= \sum_{j=1}^{n-1}|a_jp_j|<\e \sin(\theta)$
and we obtain immediate the desired result.

\end{proof}

\subsection{Proof of Theorem 3}
We are ready now to prove Theorem 3. Fix $N>0$ large, we use the lemmas referred above with the following choices: $x = 2\cos(\theta)$ where $\theta = \frac{\pi}{N}$, then we have explicit values and estimates for $U_i$ for all $i$ in terms of $N$. In particular $|U_i| \leq \frac{1}{\sin(\theta)} \leq \frac{2N}{\pi}\leq N$. 
Our goal is to prove that under certain conditions on $a_k$ then $p_n$ and $U_n$ are very close to each other. 

\begin{lemma}
Fix $N>0$. Let $x = 2\cos(\theta)$ where $\theta = \frac{\pi}{N}$. Given a sequence $\{a_i\}$ for $1\leq i\leq N$. Suppose there exists a fixed $C>0$ constant such that
\begin{align}
|a_i| \leq \frac{C}{N^3} \leq \frac{1}{2N^2}
\end{align} 
then we have $|p_i-U_i| < 2C$ for $1 \leq i \leq N+1$.
\end{lemma}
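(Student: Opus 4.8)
The plan is to bootstrap the previous lemma (the one comparing $p_n$ and $U_n$ via $\sum |a_j p_j| \le \e \sin\theta$) into an explicit quantitative statement by proving the bound $|p_i - U_i| < 2C$ by induction on $i$, using the a priori control $|U_i| \le N$ together with the smallness $|a_i| \le C/N^3$. First I would record the base cases $p_0 = U_0 = 0$, $p_1 = U_1 = 1$, so the claimed inequality holds trivially (indeed with slack) for $i = 0, 1$. Then I would assume $|p_j - U_j| < 2C$ for all $j \le n$ (with $n \le N$) and try to deduce it for $j = n+1$. The mechanism is the identity $\sin(\theta)(p_n - U_n) = -\Imm(\delta_n e^{-in\theta})$ from the orthogonal-polynomials lemma applied with the shift $a_k$; I want to show the hypothesis $\sum_{j=1}^{n} |a_j p_j| \le \e \sin\theta$ holds with $\e = 2C$.

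The key estimate is the following chain: under the inductive hypothesis, $|p_j| \le |U_j| + 2C \le N + 2C \le 2N$ for $N$ large (or, more carefully, $|p_j| \le N + 1$ using $2C \le 1$, which follows from $C/N^3 \le 1/(2N^2)$ so $C \le N/2$... actually this needs care, so I'd instead just keep $|p_j| \le N + 2C$ and absorb constants at the end). Hence
\begin{align*}
\sum_{j=1}^{n} |a_j p_j| \le \frac{C}{N^3} \sum_{j=1}^{n} (N + 2C) \le \frac{C}{N^3} \cdot N \cdot (N + 2C) = \frac{C(N+2C)}{N^2}.
\end{align*}
Since $\sin\theta = \sin(\pi/N) \ge 2/N$ for $N$ large (actually $\sin(\pi/N) \sim \pi/N$, and one has $\sin(\pi/N) \ge 2/N$ for all $N \ge 2$ since $\pi/N \le \pi/2$ and $\sin x \ge (2/\pi) x$ there, giving $\sin(\pi/N) \ge 2/N$), we get $\sum_{j=1}^n |a_j p_j| \le \frac{C(N+2C)}{N^2} \le C \cdot \frac{N + 2C}{N^2}$, and I need this to be $\le 2C \sin\theta$, i.e. $\le 2C \cdot (2/N) = 4C/N$. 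This reduces to $\frac{N+2C}{N^2} \le \frac{4}{N}$, i.e. $N + 2C \le 4N$, i.e. $2C \le 3N$, which holds for $N$ large since $C$ is fixed (and in fact is guaranteed by $C \le N/2$ coming from the standing hypothesis $C/N^3 \le 1/(2N^2)$). Therefore the hypothesis of the comparison lemma is met with $\e = 2C$ on the range $1 \le j \le n$, and applying it at index $n+1$ yields $|p_{n+1} - U_{n+1}| \le 2C$, which closes the induction; one should check the strict-versus-nonstrict inequality bookkeeping so that the final statement $|p_i - U_i| < 2C$ (strict) holds, which is fine because there is genuine slack in the estimate above (we got $\le 4C/N$ against a budget of $4C/N$ only in the crudest bound, but the $\sin\theta \sim \pi/N > 2/N$ improvement gives room).

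The main obstacle is purely the bookkeeping of constants: one must make sure the circular-looking dependence — $|p_j|$ is bounded using $|p_j - U_j| \le 2C$, which is exactly what we are trying to prove — is genuinely resolved by the induction, and that the constant $2C$ is large enough to absorb the accumulated error over all $N$ steps while $|a_j| \le C/N^3$ is small enough that the sum of $N$ terms each of size $\approx C/N^3 \cdot N = C/N^2$ stays below the threshold $\e\sin\theta \approx 2\e/N$. The role of the hypothesis $|a_i| \le C/N^3$ (one power of $N$ stronger than the naive $1/N^2$ one might guess) is precisely to leave this margin. No deeper idea is needed beyond the orthogonal-polynomial identity already quoted; the lemma is the engine that converts these per-step smallness bounds into the uniform conclusion that drives the proof of Theorem \ref{maintheorem}.
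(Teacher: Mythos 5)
Your proposal is correct and follows essentially the same route as the paper: induction on the index, the identity $\sin(\theta)(p_n-U_n)=-\Imm(\delta_n e^{-in\theta})$ from the preceding lemma, the split $|p_j|\leq |U_j|+2C\leq N+2C$, and the observation that $|a_i|\leq C/N^3\leq 1/(2N^2)$ forces $2C\leq N$, which closes the bookkeeping. The only cosmetic difference is that you bound $\sin(\pi/N)\geq 2/N$ while the paper uses $\sin(\pi/N)\geq 1/N$ and finishes via $(N+2C)C/N\leq 2C$; both yield the same conclusion.
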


\begin{proof}
From the proof of the last lemma we have
\[
\frac{1}{N}|p_n - U_n| \leq |\sin(\theta)||p_n - U_n| \leq |\delta_n| \leq \sum_{j=1}^{n-1}|a_jp_j|.
\]
We use induction. The is obvious for $i=1$. Assume the bound holds for $i \in [1,n-1]$, then for $i=n\leq N+1$ we have
\begin{align*}
\sum_{j=1}^{n-1}|a_jp_j| &\leq \sum_{j=1}^{n-1}|a_jU_j| + 2C\sum_{j=1}^{n-1}|a_j|\\
&\leq N\sum_{j=1}^{n-1}|a_j| + 2C\sum_{j=1}^{n-1}|a_j|\\
&\leq (N+2C)(n-1)\frac{C}{N^3}\\
&\leq (N+2C)\frac{C}{N^2}\\
\end{align*}
Then
\[
\frac{1}{N}|p_n - U_n| \leq (N+2C)\frac{C}{N^2}
\]
\[
|p_n - U_n| \leq (N+2C)\frac{C}{N} = C + \frac{2C^2}{N} \leq C +C = 2C.
\]
which concludes the proof.
\end{proof}

\begin{proposition}\label{pestimates} Fix $N$. Let $x=2\cos\frac{\pi}{N}$. Suppose that
\begin{align}
\left|\sum_{k=1}^{N}  a_k U_k^2 \right| \leq \frac{C}{N} \textrm{ and }|a_k|\leq \frac{C}{N^3} 
\end{align}
for all $1 \leq k \leq N$ and a fixed constant $C$ independent of $N$.
Then:
$$
|p_N|\leq C'/N,  |p_{N+1}+1|\leq C'/N,
$$
for some $C'$ independent of $N$.
\end{proposition}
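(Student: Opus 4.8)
The plan is to refine the argument of the previous lemma, which already gives the crude bound $|p_i - U_i| < 2C$ on $[1, N+1]$, by extracting the leading-order behavior of $\delta_n$ more carefully. The key identity from \eqref{mainformula} is
\[
\sin(\theta)(p_n - U_n) = -\Imm\bigl(\delta_n e^{-in\theta}\bigr), \qquad \delta_n = \sum_{j=1}^{n-1} a_j p_j e^{ij\theta}.
\]
First I would substitute $p_j = U_j + (p_j - U_j)$ into $\delta_n$. The error term $\sum_j a_j (p_j - U_j) e^{ij\theta}$ is controlled by $2C \sum_j |a_j| \leq 2C \cdot N \cdot \frac{C}{N^3} = O(1/N^2)$, which after dividing by $\sin(\theta) \sim \pi/N$ contributes only $O(1/N)$ to $p_n - U_n$ — already acceptable. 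So the whole problem reduces to estimating the \emph{main term} $\sum_{j=1}^{n-1} a_j U_j e^{ij\theta}$ for $n = N$ and $n = N+1$, where $\theta = \pi/N$ and $U_j = \sin(j\theta)/\sin(\theta)$.

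The heart of the matter is therefore to show $\bigl|\sum_{j=1}^{N} a_j U_j e^{ij\theta}\bigr| = O(1)$ given the hypothesis $\bigl|\sum_{j=1}^N a_j U_j^2\bigr| \leq C/N$ together with $|a_j| \leq C/N^3$. The natural move is to expand $U_j e^{ij\theta}$ trigonometrically: since $\sin(\theta) U_j = \sin(j\theta)$, we have $\sin(\theta)\, U_j e^{ij\theta} = \sin(j\theta)(\cos(j\theta) + i\sin(j\theta)) = \tfrac12 \sin(2j\theta) + i \sin^2(j\theta)$. Thus
\[
\sin(\theta) \sum_{j=1}^{N-1} a_j U_j e^{ij\theta} = \tfrac12 \sum_{j=1}^{N-1} a_j \sin(2j\theta) + i \sum_{j=1}^{N-1} a_j \sin^2(j\theta).
\]
The imaginary part is exactly $\sin(\theta)^2 \sum_j a_j U_j^2$, which by hypothesis is $O(\sin(\theta)^2 \cdot \tfrac{1}{N}) = O(1/N^3)$; dividing by $\sin(\theta) \sim 1/N$ leaves $O(1/N^2)$. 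For the real part $\tfrac12\sum_j a_j \sin(2j\theta)$ I would write $\sin(2j\theta) = \sin(\theta)\bigl(U_{2j}\cdot\text{something}\bigr)$ — more cleanly, use $\sin(2j\theta) = 2\sin(j\theta)\cos(j\theta)$ and note this is $O(1)$ termwise, so the crude bound gives only $\sum_j |a_j| = O(1/N^2)$, hence after dividing by $\sin\theta$ a contribution of $O(1/N)$ — still acceptable. Actually the crude termwise bound on $\sin(2j\theta)$ already suffices for the real part. Collecting: $\bigl|\sum_{j=1}^{N-1} a_j U_j e^{ij\theta}\bigr| = O(1/N) + O(1/N^2) = O(1/N)$, which is even better than needed, and then $|\delta_N| = O(1/N)$, $|\delta_{N+1}| = O(1/N)$, so
\[
|p_N - U_N| \leq \frac{|\delta_N|}{\sin\theta} = O(1), \qquad |p_{N+1} - U_{N+1}| = O(1).
\]
Finally, since $U_N = \sin(N\theta)/\sin\theta = \sin(\pi)/\sin(\pi/N) = 0$ and $U_{N+1} = \sin((N+1)\pi/N)/\sin(\pi/N) = \sin(\pi + \pi/N)/\sin(\pi/N) = -1$, we would conclude $|p_N| = O(1)$ and $|p_{N+1} + 1| = O(1)$ — but wait, the proposition claims the sharper $O(1/N)$, so I must be more careful.

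The sharper $O(1/N)$ bound forces me to track the $e^{-iN\theta}$ and $e^{-i(N+1)\theta}$ twisting in $\Imm(\delta_n e^{-in\theta})$ rather than just $|\delta_n|$. Note $e^{-iN\theta} = e^{-i\pi} = -1$, so $\Imm(\delta_N e^{-iN\theta}) = -\Imm(\delta_N)$, and $\Imm(\delta_N) = \sum_{j=1}^{N-1} a_j U_j \sin(j\theta) \cdot$ wait, more precisely $\delta_N = \sum_j a_j p_j e^{ij\theta}$ so $\Imm(\delta_N) = \sum_j a_j p_j \sin(j\theta)$; replacing $p_j$ by $U_j$ up to the $O(1)$ error times $|a_j|$ summing to $O(1/N^2)$, the main part is $\sum_j a_j U_j \sin(j\theta) = \sin(\theta) \sum_j a_j U_j^2$, which is $O(\sin\theta \cdot \tfrac1N) = O(1/N^2)$. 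Hence $|p_N - U_N| = |\Imm(\delta_N)|/\sin\theta = O(1/N)$, giving $|p_N| = O(1/N)$. For $p_{N+1}$, $e^{-i(N+1)\theta} = -e^{-i\theta}$, so $\Imm(\delta_{N+1}e^{-i(N+1)\theta}) = -\Imm(\delta_{N+1}e^{-i\theta}) = -\cos\theta\,\Imm(\delta_{N+1}) + \sin\theta\,\Ree(\delta_{N+1})$. Here $\Imm(\delta_{N+1})$ has main part $\sum_{j=1}^{N} a_j U_j \sin(j\theta) = \sin\theta\sum_j a_j U_j^2 = O(1/N^2)$ again, and $\sin\theta\,\Ree(\delta_{N+1}) = O(\tfrac1N)\cdot O(\tfrac1{N^2})$ from the real-part estimate above $= O(1/N^3)$; so $|p_{N+1} - U_{N+1}| = O(1/N)$, i.e. $|p_{N+1} + 1| = O(1/N)$. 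The main obstacle, then, is purely bookkeeping: isolating the imaginary part of the twisted sum $\delta_n e^{-in\theta}$ and recognizing that the hypothesis is exactly engineered so that this imaginary part equals $\sin(\theta)^2 \sum a_j U_j^2$ up to lower order — once that is seen, everything collapses. I would also need to double-check that the sums over $j \in [1, N-1]$ versus $[1,N]$ differ by a single negligible term ($a_N U_N = 0$ in fact since $U_N = 0$), which is harmless.
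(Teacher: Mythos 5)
Your proposal is correct and follows essentially the same route as the paper: both start from formula \eqref{mainformula}, use the crude bound $|p_j-U_j|\le 2C$ from the preceding lemma to replace $p_j$ by $U_j$, and then feed the hypothesis $\left|\sum a_kU_k^2\right|\le C/N$ into the main term, with the cross terms handled by $|a_k|\le C/N^3$ (the paper phrases the $p_{N+1}$ case via $\sum a_kp_kU_{k-1}$, which is the same trigonometric bookkeeping as your real/imaginary split of $\delta_{N+1}e^{-i\theta}$). The only quibble is your claim $\sin\theta\,\Ree(\delta_{N+1})=O(1/N^3)$ — since $\Ree(\delta_{N+1})=O(1/N)$ this term is really $O(1/N^2)$ — but $O(1/N^2)$ is exactly what is needed before dividing by $\sin\theta$, so your conclusion stands.
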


\begin{proof}
We use  \eqref{mainformula} for $\theta=\pi/N$:
\begin{align*}
\sin(\theta)p_N &= |\phi_{N}| \sin(N\theta - \arg(\phi_N))\\
&= |\phi_{N}| \sin(\pi - \arg(\phi_N)) \\
&=|\phi_{N}| \sin(\arg(\phi_N)) = \Imm(\phi_N) = \Imm(\delta_N)\\
&= a_1p_1\sin(\theta)+a_2p_2\sin(2\theta)+\ldots+a_{N-1}p_{N-1}\sin((N-1)\theta)
\end{align*}
Then
\begin{align*}
|p_N|
&= \frac{1}{\sin(\theta)}|a_1p_1\sin(\theta)+a_2p_2\sin(2\theta)+\ldots+a_{N-1}p_{N-1}\sin((N-1)\theta)|\\
&=|a_1p_1U_1+a_2p_2U_2+\ldots+a_{N-1}p_{N-1}U_{N-1}|\\
&\leq|a_1U_1^2+a_2U_2^2+\ldots+a_{N-1}U_{N-1}^2| + 2C\sum_{i=1}^{N-1}|a_iU_i|\\
&\leq\frac{C}{N} + 2C\frac{C}{N^3}N^2 = \frac{C'}{N}.
\end{align*}

Similarly for $p_{N+1}$ we have:
$$
\sin(\theta)(p_{N+1} - U_{N+1}) = -\Imm(\delta_{N+1} e^{-i\pi - i\theta}) = \Imm(\delta_{N+1} e^{- i\theta}) 
$$
where 
$$
\delta_{N+1} = \sum_{k=1}^N a_kp_ke^{ik\theta}
$$
then
$$
e^{-i\theta}\delta_{N+1} = \sum_{k=1}^N a_kp_ke^{i(k-1)\theta}
$$
so
$$
\Imm(\delta_{N+1} e^{- i\theta}) = \sum_{k=1}^N a_kp_k \sin((k-1)\theta)
$$
and we obtain:
$$
p_{N+1} - U_{N+1} =  \sum_{k=2}^N a_kp_k U_{k-1}
$$
Using the fact that $\left|\sum_{k=1}^{N}  a_k U_k^2 \right| < \frac{C}{N}$ implies that $\left|\sum_{k=1}^{N}  a_k U_kU_{k-1} \right| < \frac{C''}{N}$ and the same idea that for $p_N$ we obtain
$$
|p_{N+1} - U_{N+1}| = |p_{N+1} +1| < \frac{C'}{N}.
$$
\end{proof}

We are almost done proving Theorem 1, however we still need analogue bounds for $q_n$. 
\begin{lemma}
Consider the sequences $p_{0}=0, p_1=1$ and $q_{0}=1,q_1=1$ and for $k\geq 1$:
\begin{align*}
p_{k+1}=t_{k}p_k-p_{k-1}\\
q_{k+1}=t_{k}q_k-q_{k-1}. 
\end{align*}
Then
$$
q_k = p_k - \tp_{k-1}
$$
where the sequence $\tp_k$ is given by the conditions $\tp_{0}=0, \tp_1=1$ and for $k\geq 1$ we have 
\begin{align*}
\tp_{k+1}=t_{k+1}\tp_k-\tp_{k-1}.
\end{align*}
\end{lemma}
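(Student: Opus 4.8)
The plan is to prove the identity $q_k = p_k - \tp_{k-1}$ by a direct induction on $k$, after first fixing the correct convention for the index $-1$. Since the recurrence defining $\tp$ reads $\tp_{k+1} = t_{k+1}\tp_k - \tp_{k-1}$, extending it formally to $k=0$ forces $\tp_1 = t_1\tp_0 - \tp_{-1}$, i.e. $\tp_{-1} = -1$; with this convention the quantity $r_k := p_k - \tp_{k-1}$ is well defined for all $k \geq 0$, and proving $q_k = p_k - \tp_{k-1}$ amounts to showing $r_k = q_k$.

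First I would verify the two base cases: $r_0 = p_0 - \tp_{-1} = 0 - (-1) = 1 = q_0$ and $r_1 = p_1 - \tp_0 = 1 - 0 = 1 = q_1$. Then I would run the inductive step. Fix $k \geq 1$. From the $p$-recurrence, $p_{k+1} = t_k p_k - p_{k-1}$, while the $\tp$-recurrence applied with its index shifted down by one gives $\tp_k = t_k \tp_{k-1} - \tp_{k-2}$ (valid for $k\ge 2$, and for $k=1$ it is exactly the convention $\tp_1 = t_1\tp_0 - \tp_{-1}$). Subtracting these two relations,
\[
r_{k+1} = p_{k+1} - \tp_k = t_k\bigl(p_k - \tp_{k-1}\bigr) - \bigl(p_{k-1} - \tp_{k-2}\bigr) = t_k r_k - r_{k-1}.
\]
Thus $r_k$ satisfies the same three-term recurrence as $q_k$ with the same two initial values, so $r_k = q_k$ for every $k$, which is the claim.

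I do not expect any real obstacle here; the content is entirely bookkeeping of indices, and the earlier lemma in the paper is proved in the same "follows by induction" spirit. The one point that deserves a line of care is the index shift: $p$ and $q$ at level $k+1$ are built from the trace $t_k$, whereas $\tp$ is designed so that $\tp_k$ already "carries $t_k$", which is precisely why the definition of $\tp$ uses $t_{k+1}$ and precisely what makes the cancellation above work. (One can also view this as a continuant-type identity for the matrix products $M_n\cdots M_1$ versus $M_n\cdots M_2$, but the direct induction above is the shortest route and is all that is needed.)
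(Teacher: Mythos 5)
Your proof is correct and is essentially the paper's argument: the paper likewise verifies the identity by induction, comparing the difference of the sequences against the three-term recurrence and the initial conditions. Your only addition is the explicit convention $\tp_{-1}=-1$ to absorb the $k=1$ step, which is a harmless and careful way to handle the index shift.
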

\begin{proof}
The proof follows immediately by writing down $q_k-p_k$ and checking the corresponding initial conditions.
\end{proof}

Using the same idea and estimates for $p_N$ we have the following
\begin{proposition} Fix $N$. Let $x=2\cos\frac{\pi}{N}$. Suppose that
\begin{align}
\left|\sum_{k=1}^{N-1}  a_{k+1} U_k^2 \right| \leq \frac{C}{N} \textrm{ and }|a_k|\leq \frac{C}{N^3} 
\end{align}
for all $1 \leq k \leq N$ and a fixed constant $C$ independent of $N$.
Then:
$$
|\tp_N|\leq C'/N,  |\tp_{N-1}-1|\leq C'/N,
$$
for some $C'$ independent of $N$.
\end{proposition}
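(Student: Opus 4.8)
The plan is to mimic exactly the proof of Proposition \ref{pestimates}, but with the sequence $\tp_k$ in place of $p_k$ and with the shifted recursion coefficients. Recall that $\tp_k$ satisfies $\tp_{k+1} = t_{k+1}\tp_k - \tp_{k-1}$ with $t_{k+1} = 2 - \e_{k+1}^2 = x + a_{k+1}$ where $x = 2\cos(\pi/N)$ and $a_{k+1} = (\pi^2/N^2 - \e_{k+1}^2) + O(1/N^3)$ absorbs the difference between $t_{k+1}$ and $x$. So $\tp_k$ is governed by formula \eqref{generalnevai} and \eqref{mainformula} with the coefficient sequence $\{b_j\} := \{a_{j+1}\}$, i.e. $b_j = a_{j+1}$. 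First I would record that the hypothesis $|a_k| \le C/N^3$ gives $|b_j| \le C/N^3$, so the earlier lemma applies and yields $|\tp_i - U_i| \le 2C$ for $1 \le i \le N$; this is the only place the pointwise bound on the coefficients is used, exactly as for $p_i$.

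Next I would apply \eqref{mainformula} to $\tp_N$, but now the evaluation point is one step short of the ``parabolic'' index: with $\theta = \pi/N$ we have $\sin(\theta)\tp_{N-1} = |\phi_{N-1}|\sin((N-1)\theta - \arg\phi_{N-1})$, and $(N-1)\theta = \pi - \theta$, so $\sin((N-1)\theta - \arg\phi_{N-1}) = \sin(\theta + \arg\phi_{N-1})$. Expanding, $\sin(\theta)\tp_{N-1} = \sin(\theta)\Ree(\phi_{N-1}) + \cos(\theta)\Imm(\phi_{N-1})$, hence $\tp_{N-1} - \Ree(\phi_{N-1}) = \cot(\theta)\Imm(\phi_{N-1})$. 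Since $\Ree(\phi_{N-1}) = 1 + \Ree(\delta_{N-1}) = 1 + \sum_{j=1}^{N-2} b_j\tp_j\cos(j\theta)$ and $\Imm(\delta_{N-1}) = \sum_{j=1}^{N-2} b_j\tp_j\sin(j\theta) = \sin(\theta)\sum_j b_j\tp_j U_j$, the term $\cot(\theta)\Imm(\delta_{N-1}) = \cos(\theta)\sum_j b_j\tp_j U_j$ is controlled exactly as in Proposition \ref{pestimates}: split $\tp_j = U_j + (\tp_j - U_j)$, use $\sum |b_j U_j^2| = \sum |a_{j+1}U_j^2| \le C/N$ from the hypothesis, and use $|b_j| \le C/N^3$, $|U_j| \le N$, $|\tp_j - U_j| \le 2C$ to bound the error sum by $2C \cdot C/N^3 \cdot N^2 = 2C^2/N$. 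For the real part I would note $\sum_{j=1}^{N-2}|b_j\tp_j\cos(j\theta)| \le \sum |b_j|(|U_j| + 2C) \le (N + 2C)(N-2)\cdot C/N^3 = O(1/N)$, so $|\tp_{N-1} - 1| \le C'/N$. For $\tp_N$ one has $N\theta = \pi$ exactly, so $\sin(\theta)\tp_N = |\phi_N|\sin(\pi - \arg\phi_N) = \Imm(\phi_N) = \Imm(\delta_N) = \sin(\theta)\sum_{j=1}^{N-1} b_j\tp_j U_j$, and the same splitting argument gives $|\tp_N| \le C'/N$ verbatim.

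The only genuinely new bookkeeping step — and the one I expect to be the mildest of obstacles — is verifying that the hypothesis $|\sum_{k=1}^{N-1} a_{k+1}U_k^2| \le C/N$ is the ``right'' shifted analogue: in Proposition \ref{pestimates} the relevant sum that appears is $\sum_{j=1}^{N-1} a_j U_j^2$, whereas here the recursion coefficient attached to index $j$ is $a_{j+1}$, so the sum that naturally appears is $\sum_{j} a_{j+1} U_j^2$, which is precisely the stated hypothesis. Everything else is an index shift of a proof already carried out. I would also remark that, just as in Proposition \ref{pestimates} where control of $\sum a_k U_k^2$ was leveraged to control $\sum a_k U_k U_{k-1}$, here the bound on $\sum a_{k+1} U_k^2$ similarly controls the cross sums $\sum a_{k+1} U_k U_{k\pm 1}$ up to a constant, which is what is needed for the $\cos(\theta)$ and $\cos((k)\theta)$ weights rather than pure $\sin$ weights; this uses only $U_{k-1} = U_k\cos\theta - (\text{bounded}) \le |U_k| + O(1)$ type estimates and the already-established $|U_k| \le N$.

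Finally, to close the loop for Theorem \ref{maintheorem} I would combine Proposition \ref{pestimates} (giving $|p_N|, |p_{N+1}+1| \le C'/N$) with this proposition and the identity $q_k = p_k - \tp_{k-1}$ to deduce $|q_N - (p_N - \tp_{N-1})| = 0$ exactly, hence $|q_N + 1| = |p_N - \tp_{N-1} + 1| \le |p_N| + |\tp_{N-1} - 1| \le C'/N$ and similarly $|q_{N+1}|$ — wait, more precisely $q_N = p_N - \tp_{N-1}$ so $|q_N - 1|$ is not what we want; rather the matrix entries are $A_N = p_{N+1} - p_N$, $C_N = q_N - q_{N+1}$, $B_N = -p_N$, $D_N = q_N$, and plugging the estimates gives $A_N = -1 + O(1/N)$, $B_N = O(1/N)$, $D_N = q_N = p_N - \tp_{N-1} = -1 + O(1/N)$, and $C_N = q_N - q_{N+1}$ which one computes similarly to be $O(1/N)$; then $F_N(z) = (A_N z + C_N)/(B_N z + D_N) = (-z + O(1/N))/(-1 + O(1/N)) = z + O(1/N)$ uniformly on compact sets avoiding the pole, establishing $|F_N(z) - z| < C/N$. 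The passage from the $a_k$ hypotheses to the $\e_k$ hypotheses of Theorem \ref{maintheoremintro} is then just the substitution $a_k = \pi^2/N^2 - \e_k^2 + O(1/N^3)$ together with the symmetry condition $\alpha(k) + \alpha(N-k) = O(1/N)$, which is exactly what forces $\sum_k a_k U_k^2$ and $\sum_k a_{k+1}U_k^2$ to be $O(1/N)$ rather than $O(1)$, since $U_k^2 = \sin^2(k\pi/N)/\sin^2(\pi/N)$ is symmetric under $k \mapsto N-k$.
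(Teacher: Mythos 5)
Your proof is correct and is essentially the paper's own argument: the paper proves this proposition by simply noting it is Proposition \ref{pestimates} with the coefficients shifted ($a_j \mapsto a_{j+1}$, since $\tp_{k+1}=t_{k+1}\tp_k-\tp_{k-1}$) and the target indices moved from $N,N+1$ to $N-1,N$ (where $U_N=0$, $U_{N-1}=1$), which is exactly the index-shifted computation you carry out, your $\Ree$ plus $\cot(\theta)\,\Imm$ expansion being equivalent to the identity $\tp_{N-1}-1=\sum_j a_{j+1}\tp_j U_{j+1}$. The closing material on assembling $A_N,B_N,C_N,D_N$ via $q_k=p_k-\tp_{k-1}$ goes beyond this proposition but matches the paper's subsequent proof of Theorem \ref{maintheorem}.
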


\begin{proof}
The proof is exactly the same than the proof of Proposition \ref{pestimates}; the only difference pertains the shifted terms which envolves the $a_i's$.
\end{proof}

We are ready now to combine all the lemmas above and finish the proof of Theorem \ref{maintheorem}.\\

Given a sequence $\{\e_k\}$ such that:
\begin{align*}
\left|\sum_{k=1}^{N} \left(\frac{\pi^2}{N^2}-\e_k^2\right)\frac{\sin{(k\pi/N})^2}{\sin(\pi/N)^2} \right| < \frac{A}{N}
\end{align*}
and
$$
\left|\frac{\pi^2}{N^2}- \e_k^2 \right| < \frac{A}{N^3}
$$
for all $1 \leq k \leq N$ and a fixed constant $A$ independent of $N$.
Notice that we can write this in terms of $x=2\cos(\pi/N)$ and $a_k = t_k - x$ where $t_k = 2-\e_k^2$, so we obtain:
$$
\left|\sum_{k=1}^{N} a_kU_k^2\right| < \frac{A}{N}
$$
and
$$
|a_k| < \frac{C}{N^3}
$$
for all $1\leq k\leq N$.
Using lemma 1, Proposition 2 and Proposition 3, we see that $A_N = D_N = -1 + O(1/N)$ and $B_N = C_N = O(1/N)$. Which translating back into $F_N$ implies that $F_N(z)\to \textrm{Id}$ when $N\to\infty$.

\subsection{Proof of Theorem \ref{maintheoremintro}}

All is left to prove is that the conditions of $\e_k$ on Theorem \ref{maintheoremintro} are satisfied for Theorem \ref{maintheorem}.
Given $\e_k$ such that
\begin{align*}
\e_k = \frac{\pi}{N} + \frac{\alpha(k)}{N^2}+O\left(\frac{1}{N^3}\right)
\end{align*}
where $\alpha(k)$ are bounded then we immediately have:
$$
\left|\frac{\pi^2}{N^2}- \e_k^2 \right| < \frac{A}{N^3}.
$$
Also 
$$
\frac{\pi^2}{N^2}- \e_k^2 = \frac{-2\pi\alpha(k)}{N^3}+O\left(\frac{1}{N^4}\right)
$$
Therefore:
$$
S=\left|\sum_{k=1}^{N} \left(\frac{\pi^2}{N^2}-\e_k^2\right)\frac{\sin{(k\pi/N})^2}{\sin(\pi/N)^2} \right| =\left|\sum_{k=1}^{[N/2]} \left(\frac{-2\pi(\alpha(k)+\alpha(N-k))}{N^3}+O\left(\frac{1}{N^4}\right)\right)\frac{\sin{(k\pi/N})^2}{\sin(\pi/N)^2} \right| 
$$
Since we have the condition $\alpha(k)+\alpha(N-k) = O(1/N)$ then we have
$$
S= \left|\sum_{k=1}^{[N/2]} O\left(\frac{1}{N^4}\right)\frac{\sin{(k\pi/N})^2}{\sin(\pi/N)^2} \right| <\frac{C}{N^4}.[N/2].N^2 = \frac{C'}{N},
$$
where we are using the trivial bounds on each $\frac{\sin{(k\pi/N})^2}{\sin(\pi/N)^2}<N^2$ and adding the $N/2$ factors.
We have that both conditions of Theorem \ref{maintheorem} are satisfied and the conclusion follows.

\subsection{Conditions are necessary}\label{necessaryconditions}

In this section we prove the following result:
\begin{theorem}\label{counterexample} There exists $\{\e_k\}, 1 \leq k \leq N$ a sequence such that:
\begin{align}
\e_k = \frac{\pi}{N} + \frac{\alpha(k)}{N^2}+O\left(\frac{1}{N^3}\right)
\end{align}
where $\alpha(k)$ are bounded and for $f_k(z) := \frac{z}{1-z}+\e_k^2$ we have
$$
f_N \circ f_{N-1} \circ \ldots f_2\circ f_1 \not\to \textrm{Id},
$$
\end{theorem}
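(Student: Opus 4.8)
The plan is to negate the sufficient condition of Theorem \ref{maintheorem} in the sharpest possible way: construct a bounded sequence $\alpha(k)$ for which the weighted sum $S = \sum_{k=1}^{N}\left(\frac{\pi^2}{N^2}-\e_k^2\right)\frac{\sin(k\pi/N)^2}{\sin(\pi/N)^2}$ does \emph{not} go to zero, and then trace through the orthogonal-polynomial machinery to show this forces $p_N$ (equivalently $B_N = -p_N$) to stay bounded away from $0$, so that $F_N$ cannot converge to the identity. Since $\frac{\pi^2}{N^2}-\e_k^2 = \frac{-2\pi\alpha(k)}{N^3}+O(N^{-4})$, we have (up to lower order) $S \approx \sum_{k=1}^N \frac{-2\pi\alpha(k)}{N^3}\cdot\frac{\sin(k\pi/N)^2}{\sin(\pi/N)^2}$, and because $\frac{\sin(k\pi/N)^2}{\sin(\pi/N)^2}$ is of size $N^2$ for $k$ in the bulk, each term is of size $\alpha(k)/N$, so the sum is a Riemann-type sum of order $1$ unless cancellation occurs. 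The symmetry condition $\alpha(k)+\alpha(N-k)=O(1/N)$ in Theorem \ref{maintheorem} is precisely what creates that cancellation (since the weight is symmetric under $k\mapsto N-k$); so the natural counterexample is to take $\alpha$ \emph{not} satisfying it — e.g. $\alpha(k)\equiv 1$, or $\alpha$ a fixed nonzero constant.

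Concretely, I would first set $\e_k^2 = \frac{\pi^2}{N^2} + \frac{a}{N^3}$ exactly (no error term) for a fixed constant $a\neq 0$, which corresponds to $\alpha(k)=-\frac{a}{2\pi}$, a bounded constant sequence violating the symmetry hypothesis. Then $t_k - x = a_k = t_k - 2\cos(\pi/N)$ is a constant $a_k \equiv a_\ast$ with $a_\ast = -\frac{a}{N^3} + O(N^{-5})$ (comparing $2-\e_k^2$ with $2\cos(\pi/N) = 2 - \frac{\pi^2}{N^2}+O(N^{-4})$). The recursion \eqref{generalnevai} with a \emph{constant} shift $a_\ast$ is still explicitly solvable: it is the Chebyshev-type recursion for $x + a_\ast = 2\cos(\tilde\theta)$, so $p_k = \sin(k\tilde\theta)/\sin(\tilde\theta)$ with $\tilde\theta$ determined by $2\cos\tilde\theta = 2\cos(\pi/N) + a_\ast$. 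Expanding, $\tilde\theta = \frac{\pi}{N} + \frac{c}{N^2} + O(N^{-3})$ for an explicit nonzero $c$ proportional to $a$, hence $N\tilde\theta = \pi + \frac{c}{N} + O(N^{-2})$, and $p_N = \frac{\sin(N\tilde\theta)}{\sin\tilde\theta} = \frac{\sin(\pi + c/N + \cdots)}{\sin(\pi/N + \cdots)} \approx \frac{-c/N}{\pi/N} = -\frac{c}{\pi} \neq 0$. Thus $B_N = -p_N$ does not tend to $0$, so $A_N = p_{N+1}-p_N$ and $D_N = q_N$ can be computed similarly (using Lemma on $q_k = p_k - \tp_{k-1}$ with the shifted recursion, again explicitly solvable), and one sees $F_N$ converges to a nontrivial Möbius map — in particular $F_N \not\to \mathrm{Id}$.

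The key steps in order: (i) choose the explicit violating sequence and compute $a_k \equiv a_\ast = -a/N^3 + O(N^{-5})$; (ii) solve the constant-shift recursion exactly via the substitution $x+a_\ast = 2\cos\tilde\theta$ and get the asymptotic expansion $\tilde\theta = \pi/N + c/N^2 + O(N^{-3})$; (iii) evaluate $p_N$, $p_{N+1}$ and the analogous $\tp$ quantities and translate to $A_N, B_N, C_N, D_N$ via Lemma 1; (iv) conclude that the limiting matrix is not a scalar multiple of the identity, so $F_N(z)$ has a limit that is not $z$. As a variant, to also illustrate genuine \emph{divergence} rather than convergence to a nontrivial map, I would alternatively pick $\alpha(k)$ with $\sum \alpha(k)\sin(k\pi/N)^2/\sin(\pi/N)^2$ oscillating (e.g. by letting the constant $a$ itself depend on $N$ through $a = a(N)$ bounded but not convergent), forcing $p_N$ to oscillate; but the cleanest writeup is the fixed-$a$ version above.

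The main obstacle I anticipate is step (iii): making sure the error terms in the expansion of $\tilde\theta$, and in passing from $p_N$ to the matrix entries $A_N, B_N, C_N, D_N$ and then to $F_N(z)$ on a compact set $K$, are genuinely $o(1)$ so that the nonvanishing of the leading term $-c/\pi$ survives — in particular one must check $\tilde\theta$ stays real (equivalently $|2\cos(\pi/N)+a_\ast| \le 2$, which holds for $a_\ast < 0$ and $N$ large) and that $\sin\tilde\theta$ does not become anomalously small. A secondary care point is verifying that the chosen $\e_k$ indeed fits the form $\e_k = \pi/N + \alpha(k)/N^2 + O(N^{-3})$ with bounded $\alpha$, which is immediate from $\e_k^2 = \pi^2/N^2 + a/N^3$, but should be stated.
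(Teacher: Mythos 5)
Your proposal is correct, and it rests on the same core idea as the paper: take a \emph{constant} sequence $\e_k\equiv\e$ whose second-order term $\alpha$ is a nonzero constant (so the symmetry condition $\alpha(k)+\alpha(N-k)=O(1/N)$ of Theorem \ref{maintheoremintro} fails), and observe that the resulting autonomous composition does not tend to the identity. The difference is in the verification. The paper chooses the special value $\e^2=2-2\cos\bigl(\tfrac{\pi}{N+1}\bigr)$, for which the Möbius matrix is elliptic of exact period $N+1$, so $(f_\e)^{N+1}=\mathrm{Id}$ and hence $F_N=(f_\e)^{-1}\to \frac{z}{1+z}$ with no asymptotic analysis at all; your version takes a general fixed $a\neq 0$ with $\e_k^2=\frac{\pi^2}{N^2}+\frac{a}{N^3}$, solves the constant-shift Chebyshev recursion via $2\cos\tilde\theta=2-\e^2$, and extracts the limit from the expansion $\tilde\theta=\frac{\pi}{N}+\frac{a}{2\pi N^2}+O(N^{-3})$. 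Carried out, your computation gives $B_N=-p_N\to \frac{a}{2\pi^2}$, $A_N,D_N\to -1$, $C_N\to 0$, i.e.\ $F_N\to \frac{z}{1-\frac{a}{2\pi^2}z}$, a one-parameter family of parabolic (non-identity) limits of which the paper's example is the case $a=-2\pi^2$; so your route is more computational but also slightly more informative. A few minor points to fix in a writeup: the sign and value of $\alpha$ is $\alpha(k)\equiv\frac{a}{2\pi}$ (not $-\frac{a}{2\pi}$); the error in $a_\ast$ is $O(N^{-4})$, not $O(N^{-5})$, since $2\cos(\pi/N)=2-\frac{\pi^2}{N^2}+\frac{\pi^4}{12N^4}+\cdots$ (harmless, as you never need $a_\ast$ beyond leading order); and to conclude $F_N\not\to\mathrm{Id}$ it is cleanest either to exhibit the limit map explicitly as above or to note that the determinant-one normalization forces the matrices of a sequence converging locally uniformly to the identity to tend to $\pm I$, which $B_N\to\frac{a}{2\pi^2}\neq 0$ rules out.
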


\begin{proof}
Fix $N$ as above. Let $\e_k^2 = 2-2\cos(\frac{\pi}{N+1})$ for all $k$. We have that each $\e_k = \frac{\pi}{N}- \frac{\pi}{N^2}+O(1/N^3)$. So the conditions above are satisfied. However we do know that
$f_{N+1} \circ f_{N} \circ \ldots f_2\circ f_1 = (f_\e)^{N+1}=\textrm{Id}$ and therefore $f_\e^N= (f_e)^{-1}\to \frac {z}{1+z}$ when $N\to \infty$.
\end{proof}

\section{Special examples}

\subsection{Perturbations of the autonomous case}
\begin{theorem}\label{closebytheorem}
Fix $N>0$ and a sequence of positive real numbers $\{\e_k, 1\leq k\leq N\}$ satisfying the following condition:
\begin{align*}
\e_k=\frac{\pi}{N}+ A\left(-\frac{1}{N^2} + \frac{2k}{N^3}\right)+ O\left(\frac{1}{N^3},\right)
\end{align*}
for $1\leq k\leq N$, and a constant $A$ independent of $N$.
Then we have that the following holds:
$$
F_N = f_N\circ f_{N-1}\circ \ldots f_2\circ f_1 = z + \frac{B(z)}{N}
$$
where $f_k (z) = f_{\e_k}(z) = \frac{z}{1-z} + \e_k^2$, for $k\geq 1$.
\end{theorem}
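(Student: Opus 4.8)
The plan is to reduce Theorem~\ref{closebytheorem} to Theorem~\ref{maintheorem} by verifying that the particular sequence $\e_k = \frac{\pi}{N} + A\left(-\frac{1}{N^2} + \frac{2k}{N^3}\right) + O(1/N^3)$ is of the admissible form treated there, i.e. that it satisfies the two hypotheses of Theorem~\ref{maintheorem} (equivalently, that it has the shape required in Theorem~\ref{maintheoremintro}). Writing $\a(k) = A(-1 + 2k/N) = A\frac{2k-N}{N}$, we see $\a(k)$ is bounded uniformly in $k$ and $N$, and crucially $\a(k) + \a(N-k) = A\frac{2k-N}{N} + A\frac{2(N-k)-N}{N} = A\frac{(2k-N)+(N-2k)}{N} = 0$, so the symmetrizing hypothesis $\a(k)+\a(N-k) = O(1/N)$ holds (with zero on the nose). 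By the computation in the proof of Theorem~\ref{maintheoremintro}, this already yields $|F_N(z)-z| < C/N$ on compacts, which is the qualitative statement; the theorem as phrased asks for the sharper asymptotic expansion $F_N(z) = z + B(z)/N$, so the real work is extracting the precise $1/N$-coefficient rather than just its order of magnitude.

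First I would record, via the Lemma expressing $A_n,B_n,C_n,D_n$ in terms of $p_n,q_n$ and the Lemma giving $q_k = p_k - \tp_{k-1}$, that
\[
F_N(z) = \frac{(p_{N+1}-p_N)z + (q_N - q_{N+1})}{-p_N z + q_N},
\]
so that $F_N(z) - z = \frac{(p_{N+1}-p_N + p_N z)z + q_N - q_{N+1} - q_N z}{-p_N z + q_N}$. From Propositions~\ref{pestimates} and~3 we know $p_N = O(1/N)$, $p_{N+1} = -1 + O(1/N)$, $\tp_N = O(1/N)$, $\tp_{N-1} = 1 + O(1/N)$, hence $q_N = p_N - \tp_{N-1} = -1 + O(1/N)$ and $q_{N+1} = p_{N+1} - \tp_N = -1 + O(1/N)$; thus the denominator is $1 + O(1/N)$ and bounded below on $K$, and the numerator is $O(1/N)$. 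To pin down the coefficient of $1/N$ I would push the estimates in Proposition~\ref{pestimates} one order further: in the identity $\sin(\theta)p_N = \Imm(\delta_N) = \sum_{j=1}^{N-1} a_j p_j \sin(j\theta)$ with $\theta = \pi/N$, replace $p_j$ by $U_j + O(a\cdot\text{stuff})$, use $a_k = \frac{\pi^2}{N^2} - \e_k^2 + \text{(negligible)}$, and substitute $\frac{\pi^2}{N^2} - \e_k^2 = -\frac{2\pi A}{N^3}\frac{2k-N}{N} + O(1/N^4)$ to get $N\,p_N \to$ an explicit constant via a Riemann-sum limit of $\sum_k \frac{2\pi A(2k-N)}{N^4} \cdot \frac{\sin^2(k\pi/N)}{\sin^2(\pi/N)} \cdot \frac{\sin(k\pi/N)}{\sin(\pi/N)}\cdot\frac{1}{\sin(\pi/N)}$-type expressions; doing the same for $\tp_N$ and $\tp_{N-1}$ gives the $1/N$-coefficients of $q_N, q_{N+1}, p_{N+1}$. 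Assembling these in the Möbius formula produces $B(z)$ as an explicit (at most quadratic in $z$) polynomial, holomorphic, with the error genuinely $O(1/N^2)$ uniformly on $K$.

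The main obstacle is this last refinement: Theorem~\ref{maintheorem} and the propositions only deliver the bound $O(1/N)$ on the matrix entries, not an asymptotic expansion, so I cannot simply cite them. I need to re-run the argument of Proposition~\ref{pestimates} keeping the next-order term, which means controlling the sum $\sum_{j=1}^{N-1} a_j p_j \sin(j\theta)$ to accuracy $o(1/N)$ rather than just $O(1/N)$ — this requires (a) replacing $p_j$ by $U_j$ costs $\sum |a_j|\cdot O(1) \cdot O(1) = O(N)\cdot O(1/N^3) = O(1/N^2)$, which is fine, and (b) recognizing $\frac{1}{\sin(\pi/N)}\sum_j a_j U_j \sin(j\theta) = \sum_j a_j U_j^2 \sin(\pi/N)$ and showing the Riemann sum $\sum_k \left(\frac{\pi^2}{N^2}-\e_k^2\right)\frac{\sin^2(k\pi/N)}{\sin^2(\pi/N)}$ converges (after multiplying/dividing by appropriate powers of $N$) to a definite integral when the $\a(k)$ come from the smooth profile $A(2t-1)$. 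Concretely, with $\a(k) = A(2k/N - 1)$ one computes $N \cdot \sum_{k=1}^{N}\left(\tfrac{\pi^2}{N^2}-\e_k^2\right)\tfrac{\sin^2(k\pi/N)}{\sin^2(\pi/N)} \to -2\pi A \int_0^1 (2t-1)\tfrac{\sin^2(\pi t)}{(\pi/N)^2}\ldots$ — care with the powers of $N$ in $1/\sin^2(\pi/N) \sim N^2/\pi^2$ is needed — and the antisymmetry of $2t-1$ about $t=1/2$ against the symmetric weight $\sin^2(\pi t)$ makes the leading term vanish, consistent with the $C/N$ bound, so $B(z)$ is actually governed by the $O(1/N^3)$ correction terms in the hypothesis; I would state the theorem with $B(z)$ depending on those corrections, or, if the intent is merely the qualitative $F_N(z) = z + O(1/N)$, invoke Theorem~\ref{maintheorem} directly and present the expansion as shorthand for that bound.
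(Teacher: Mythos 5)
Your proposal is correct and follows essentially the paper's own route: the paper likewise just verifies the two hypotheses of Theorem~\ref{maintheorem}, using $U_k=U_{N-k}$ and the antisymmetry $a_k+a_{N-k}=O(1/N^4)$ (equivalently your $\alpha(k)+\alpha(N-k)=0$), and concludes. Your closing reading is the intended one — the paper's $B(z)/N$ is only shorthand for the $|F_N(z)-z|\leq C/N$ bound of Theorem~\ref{maintheorem}, so the finer asymptotic extraction you sketch (while plausible) is not needed.
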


\begin{proof}
Note that 
\[
a_k = 2-\e_k^2-2\cos\left(\frac{\pi}{N}\right) = \frac{\pi^2}{N^2} - \e_k^2 +O(1/N^4)
\]
and given the condition on $\e_k$ we have therefore that
$$
a_k= \frac{2A\pi}{N}\left(\frac{1}{N^2} - \frac{2k}{N^3}\right) +O\left(\frac{1}{N^4}\right); 
|a_k| \leq \frac{C'}{N^3}.
$$
As well as: 
\begin{align*}
\left|\sum_{k=1}^{N} \left(2-\e_k^2-2\cos\frac{\pi}{N}\right)\frac{\sin{(k\pi/N})^2}{\sin(\pi/N)^2} \right|&= \left|\sum_{k=1}^{N} a_kU_k^2 \right| \\
&=\left|\sum_{k=1}^{\lfloor{N/2}\rfloor}  (a_k+a_{N-k}) U_k^2\right|.
\end{align*}
where we use $U_k= U_{N-k}$.
Since
$$
a_k= \frac{2A\pi}{N^4}(N - 2k) +O\left(\frac{1}{N^4}\right); 
$$
Then $a_k + a_{N-k}= O\left(\frac{1}{N^4}\right)$ therefore 
\begin{align*}
\left|\sum_{k=1}^{\lfloor{N/2}\rfloor}  (a_k+a_{N-k}) U_k^2\right| \leq \frac{C'}{N}
\end{align*}
and both conditions of Theorem \ref{maintheorem} so the theorem is proved.
\end{proof}

\begin{example}

Given $m\in\NN$, consider the following sequence:
\begin{align}\label{epsilonwandering1}
\e_k=\frac{\pi}{2\sqrt{m^2+k}}
\end{align}
for $1\leq k \leq 2m+1=N$.
Then $N-1=2m$ and:
$$
\e_k=\frac{\pi}{\sqrt{(N-1)^2+4k}}= \frac{\pi}{N}\left(1-\frac{(2-4k/N)}{N}+\frac{1}{N^2}\right)^{-1} =  \frac{\pi}{N}-\frac{2\pi(2k/N-1)}{N^2}+O(1/N^3)
$$
So we have $|a_k+a_{N-k}|<\frac{C}{N^4}$ for all $1 \leq k \leq N$.
and Theorem \ref{closebytheorem} with $A=-2\pi$ applies.
\end{example}

\begin{align*}
\e_k=\frac{\pi}{N}+ A\left(-\frac{1}{N^2} + \frac{2k}{N^3}\right)+ O\left(\frac{1}{N^4},\right)
\end{align*}

\begin{example}

Given $m\in\NN$, consider the following sequence:
\begin{align}\label{epsilonwandering2}
\e_k=\frac{\pi}{2\sqrt{4m^2+2k}}
\end{align}
for $1\leq k \leq 4m+2=N$.
Then, a similar computation as above shows that:
$$
\e_k = \frac{\pi}{\sqrt{16m^2+8k}}= \frac{\pi}{\sqrt{(N-2)^2+8k}}=  \frac{\pi}{N}-\frac{2\pi(2k/N-1)}{N^2}+O(1/N^3)
$$
and we can apply Theorem \ref{closebytheorem} again.
\end{example}

%

\subsection{Very close perturbations}

\begin{theorem}\label{secondtheorem}
Fix $N>0$ and a sequence of positive real numbers $\{\e_k, 1\leq k\leq N\}$ satisfying the following condition:
\begin{align*}
\left|\e_k-\frac{\pi}{N}\right| \leq \frac{C}{N^3},
\end{align*}
for a constant $C$ independent of $N$.
Then we have that the following holds:
$$
F_N = f_N\circ f_{N-1}\circ \ldots f_2\circ f_1 = z + \frac{A(z)}{N}
$$
where $f_k (z) = f_{\e_k}(z) = \frac{z}{1-z} + \e_k^2$, for $k\geq 1$.
\end{theorem}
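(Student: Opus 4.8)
The plan is to deduce Theorem \ref{secondtheorem} from Theorem \ref{maintheorem} by verifying the two hypotheses of the latter. Write $t_k = 2-\e_k^2$ and $a_k = t_k - 2\cos(\pi/N) = \frac{\pi^2}{N^2} - \e_k^2 + O(1/N^4)$, exactly as in the proof of Theorem \ref{closebytheorem}. From $|\e_k - \pi/N| \le C/N^3$ we get $|\e_k^2 - \pi^2/N^2| = |\e_k - \pi/N|\,|\e_k + \pi/N| \le \frac{C}{N^3}\cdot\frac{C'}{N} \le \frac{C''}{N^4}$, which is even better than the required $|a_k| \le A/N^3$; so the pointwise bound (the second hypothesis of Theorem \ref{maintheorem}) holds trivially.

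The remaining task is the summation bound $\bigl|\sum_{k=1}^N a_k U_k^2\bigr| \le A/N$. Here I would simply use the triangle inequality together with the trivial estimate $U_k^2 \le 1/\sin^2(\pi/N) \le N^2$ (for $N$ large, $\pi/N$ small) and the bound $|a_k| \le C''/N^4$ established above. Then
\[
\left|\sum_{k=1}^N a_k U_k^2\right| \le \sum_{k=1}^N |a_k|\, U_k^2 \le N \cdot \frac{C''}{N^4}\cdot N^2 = \frac{C''}{N},
\]
so this hypothesis holds as well, with no need for the cancellation trick $U_k = U_{N-k}$ that was essential in Theorem \ref{closebytheorem}. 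With both hypotheses of Theorem \ref{maintheorem} verified, its conclusion gives $|F_N(z) - z| < C/N$ on compact sets, i.e. $F_N(z) = z + A(z)/N$ for a suitable bounded-on-compacta function $A(z)$, which is exactly the assertion of Theorem \ref{secondtheorem}.

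I do not expect any genuine obstacle here: unlike the earlier results, the hypothesis $|\e_k - \pi/N| \le C/N^3$ is so strong that each individual term $a_k$ is already $O(1/N^4)$, making even the crude term-by-term estimate of the sum sufficient. The only point requiring a small amount of care is the passage from $|\e_k - \pi/N|$ to $|a_k|$, i.e. controlling the $O(1/N^4)$ error coming from $2\cos(\pi/N) = 2 - \pi^2/N^2 + O(1/N^4)$ in the definition of $a_k$; this is routine Taylor expansion and is already implicit in the proof of Theorem \ref{closebytheorem}.
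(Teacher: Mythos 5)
Your proposal is correct and matches the paper's own argument essentially verbatim: both verify the hypotheses of Theorem \ref{maintheorem} by noting that $|\e_k-\pi/N|\le C/N^3$ forces $|a_k|=O(1/N^4)$, and then bound the sum term-by-term using the crude estimate $U_k^2\le 1/\sin^2(\pi/N)=O(N^2)$, with no cancellation needed. No discrepancies worth noting.
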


\begin{proof}
Note that 
\[
a_k = 2-\e_k^2-2\cos\left(\frac{\pi}{N}\right) = \frac{\pi^2}{N^2} - \e_k^2
\]
and given the condition on $\e_k$ we have therefore that
$$
|a_k| \leq \frac{C'}{N^4}.
$$
So both condition on Theorem \ref{maintheorem} are satisfied. Indeed, the second condition is clear, and the first one follows since each sine term is bounded by $1$ above then:
\begin{align*}
\left|\sum_{k=1}^{N} \left(2-\e_k^2-2\cos\frac{\pi}{N}\right)\frac{\sin{(k\pi/N})^2}{\sin(\pi/N)^2} \right|&= \left|\sum_{k=1}^{N} a_k\frac{\sin{(k\pi/N})^2}{\sin(\pi/N)^2} \right| \\
&\leq  \left|\sum_{k=1}^{N} a_k\frac{1}{\sin(\pi/N)^2} \right|\\
&\leq  \left|N \frac{C'}{N^4}\frac{4N^2}{\pi^2} \right|\\
&< \frac{A}{N}
\end{align*}
\end{proof}

\begin{example}
Given $N\in\NN$, consider the following sequence:
$$
\e_k=\frac{\pi}{(N^3+k)^{1/3}}
$$
for $1\leq k \leq N$.
Then
$$
\e_k =\frac{\pi}{N}\left(1+\frac{k}{N^3}\right)^{-1/3}
\sim \frac{\pi}{N} - \frac{\pi k}{3N^4} + O\left(\frac{k^2}{N^7}\right).
$$
Then Theorem \ref{secondtheorem} applies and we have the result for this specific choice of $\e_k$.
\end{example}

\section{Bifurcations for two dimensional maps}

Much of this works has been inspired by the recent paper by Astorg, Buff, Dujardin, Peter and Raissy \cite{ABDPR} on bifurcations for a specific map on two dimensions.
Let us recall one part of their result.
Given the map:
$$
F(z,w)= (z+z^2 +az^3 + \frac{\pi^2}{4}w,w-w^2 +w^3)
$$
then they prove that the following holds: the sequence of maps $F^{\circ 2n+1}(z,g^{\circ n^2}(w))$ converges locally uniformly to the map $(\mathcal{L}_f(z),0)$. Here $\mathcal{L}_f$ is the Lavaurs map corresponding to the map $f$ where $F(z,0)=(f(z),0)$.

We see now that applying the same idea and prove that:
\begin{corollary} For the map
$$
H(z,w)= \left(\frac{z}{1-z} + \frac{\pi^2}{4}w, w-w^2 +w^3\right) = (h_w(z),w-w^2+w^3)
$$
then the sequence of maps $H^{\circ 2n+1}(z,g^{\circ n^2}(w))$ converges locally uniformly to the map $(z,0)$. 
As a consequence, the sequence $(H^{\circ n^2})_{n\geq 0}$ converges locally uniformly to $(\pi_z,0)$ on $\CC\times \mathcal{B}_g$, where $\pi_z$ is the projection to the first coordinate and $\mathcal{B}_g$ is the parabolic basin of $g$.
\end{corollary}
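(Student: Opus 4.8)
The plan is to follow closely the strategy of Astorg--Buff--Dujardin--Peters--Raissy, replacing their one-dimensional parabolic bifurcation input with Theorem \ref{maintheoremintro}. First I would set up the fibered dynamics: write $H(z,w) = (h_w(z), g(w))$ with $g(w) = w - w^2 + w^3$, and note that $H^{\circ n}(z,w) = (h_{g^{\circ(n-1)}(w)} \circ \cdots \circ h_{g(w)} \circ h_w(z), g^{\circ n}(w))$, so the first coordinate of $H^{\circ(2n+1)}(z, g^{\circ n^2}(w))$ is a composition of $2n+1$ one-dimensional M\"obius-type maps $h_{w_j}(z) = \frac{z}{1-z} + \frac{\pi^2}{4} w_j$, where $w_j$ runs through the orbit $g^{\circ n^2}(w), g^{\circ(n^2+1)}(w), \dots, g^{\circ(n^2+2n)}(w)$. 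Thus the $j$-th perturbation parameter is $\e_j^2 = \frac{\pi^2}{4} g^{\circ(n^2+j-1)}(w)$ for $1 \le j \le 2n+1$.

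The core of the argument is then the asymptotic analysis of the tail orbit of the parabolic map $g$. Since $g(w) = w - w^2 + O(w^3)$ has a parabolic fixed point at $0$ with the attracting petal containing a neighborhood of the positive real axis (after the standard normalization), for $w$ in the parabolic basin $\mathcal{B}_g$ one has the classical Fatou-coordinate asymptotics $g^{\circ m}(w) = \frac{1}{m} + \frac{c \log m}{m^2} + \frac{\psi(w)}{m^2} + o(1/m^2)$ for some constant $c$ and a bounded function $\psi$ depending on $w$. The key step is to plug $m = n^2 + j - 1$ with $N := 2n+1$ and to extract from this the expansion required by Theorem \ref{maintheoremintro}: namely $\e_j = \frac{\pi}{2}\sqrt{g^{\circ(n^2+j-1)}(w)} = \frac{\pi}{N} + \frac{\alpha(j)}{N^2} + O(1/N^3)$ with $\alpha(j)$ bounded and the crucial cancellation $\alpha(j) + \alpha(N-j) = O(1/N)$. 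I expect this last symmetry condition to be the main obstacle: it amounts to showing that the $1/N^2$-coefficient of $\e_j$ is, up to $O(1/N)$, an affine function of $j$ of slope $1/N$ plus an even reflection symmetry about $j = N/2$ — precisely the structure already isolated in Theorem \ref{closebytheorem}. Concretely, $n^2 + j - 1$ ranges over an interval of length $\approx 2n = N-1$ centered near $n^2 + n$, so writing $m = n^2 + n + s$ with $|s| \le n$ and Taylor-expanding $1/m = \frac{1}{n^2+n}(1 - \frac{s}{n^2+n} + \cdots)$ should give $g^{\circ m}(w) = \frac{1}{n^2}(1 - \frac{1 + 2s/(2n)}{n} + O(1/n^2))$-type behavior, and one checks that reflecting $j \mapsto N - j$ sends $s \mapsto -s$ up to $O(1)$, which produces the needed cancellation in $\alpha(j) + \alpha(N-j)$. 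The $\log m / m^2$ term and the $\psi(w)/m^2$ term both contribute only to $\alpha(j) + \alpha(N-j)$ at order $O(\log N / N)$ or $O(1/N)$ since they vary slowly across the window, so they are harmless.

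Once the hypotheses of Theorem \ref{maintheoremintro} are verified, that theorem gives that the first coordinate $f_N \circ \cdots \circ f_1(z) \to z$ locally uniformly on compacts of $\CC$, while the second coordinate is $g^{\circ(n^2 + 2n + 1)}(w) = g^{\circ((n+1)^2)}(w) \to 0$ since $w \in \mathcal{B}_g$. This proves that $H^{\circ(2n+1)}(z, g^{\circ n^2}(w)) \to (z, 0)$ locally uniformly on $\CC \times \mathcal{B}_g$, which is the first assertion. For the consequence about $(H^{\circ n^2})_{n \ge 0}$, I would argue as in \cite{ABDPR}: write $n^2 = (2n'+1) + \text{(lower order)}$... more precisely, for the orbit point $g^{\circ n^2}(w)$ one wants to recognize $H^{\circ n^2}(z,w)$ itself as having first coordinate close to $z$. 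Here one uses that $H^{\circ n^2}(z,w) = \big(h_{g^{\circ(n^2-1)}(w)} \circ \cdots \circ h_w(z),\, g^{\circ n^2}(w)\big)$, and the composition of all $n^2$ maps can be split: the first $n^2 - (2n+1)$ of them have parameters $\e_j$ with $\e_j$ much smaller than $\pi/(2n+1)$ only near the end, so a direct telescoping/contraction estimate (the maps $h_w$ with small $w$ are close to the parabolic $\frac{z}{1-z}$, whose iterates drift slowly) combined with the just-proved statement applied to the last $2n+1$ iterates shows the whole composition tends to $z$. The cleanest route is exactly the one indicated in \cite{ABDPR}: observe $n^2 = \sum$ of consecutive odd blocks, or simply note $(n+1)^2 - n^2 = 2n+1$ and run an induction on $n$ using the first part as the inductive step, with the basin hypothesis $w \in \mathcal{B}_g$ guaranteeing $g^{\circ n^2}(w) \to 0$ and hence all relevant parameters stay in the good regime; the local uniform convergence on $\CC \times \mathcal{B}_g$ then follows by a normal-families/equicontinuity argument on the compactly contained pieces of $\mathcal{B}_g$.
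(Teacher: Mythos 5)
For the first assertion your proposal is essentially the paper's proof: you write $H$ as the skew product $(h_w(z),g(w))$, identify the first coordinate of $H^{\circ(2n+1)}(z,g^{\circ n^2}(w))$ as a composition of $2n+1$ maps $z\mapsto\frac{z}{1-z}+\e_j^2$ with $\e_j^2=\frac{\pi^2}{4}g^{\circ(n^2+j-1)}(w)$, and verify the hypotheses of Theorem \ref{maintheoremintro} from the orbit asymptotics $g^{\circ m}(w)\sim 1/m$. The paper does the same, routing the verification through the sequence \eqref{epsilonwandering1} and Theorem \ref{closebytheorem} with $\e_k=\frac{\pi}{2\sqrt{m^2+k}}$, $N=2m+1$ (it even writes $w_k=1/k$, suppressing the Fatou-coordinate constant that you correctly keep and that only perturbs $\e_j$ at order $1/N^3$); your expansion about the centre $n^2+n+s$ of the window is the same computation. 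One caution: for a general parabolic $g$ the $\log m/m^2$ term would contribute $O(\log N/N)$ to $\alpha(j)+\alpha(N-j)$, which is \emph{not} covered by the hypothesis $\alpha(j)+\alpha(N-j)=O(1/N)$; here this is moot because $g(w)=w-w^2+w^3$ is chosen precisely so that the logarithmic term vanishes, and you should invoke that fact rather than call the term harmless.

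For the ``as a consequence'' statement your argument has a genuine gap (the paper, for its part, offers no argument at all: its proof stops after the reduction of the first statement). Theorem \ref{maintheorem} gives each block of $2k+1$ iterates, started at $g^{\circ k^2}(w)$, only as the identity up to an error $O(1/k)$ in the first coordinate, and your induction on $(k+1)^2=k^2+(2k+1)$ accumulates these errors additively: over the roughly $n$ blocks between a fixed $k_0^2$ and $n^2$ the bound is $\sum_{k\le n}C/k=O(\log n)$, which does not tend to $0$, so the induction does not close. Nor can the first $n^2-(2n+1)$ maps be dismissed by a ``slow drift'' estimate: over of order $n^2$ iterates the unperturbed parabolic $z\mapsto z/(1-z)$ already moves every compact set into a small neighbourhood of $0$, so those maps are not uniformly close to anything identity-like. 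To repair the induction one must show that the per-block errors are summable or cancel, and that requires input beyond Theorem \ref{maintheorem}: comparing the composition with the fundamental solutions of the continuous model $u''+\frac{\pi^2}{4t}\,u=0$ (Bessel functions of order one) indicates that the block error contains a systematic multiplicative contraction of size $1-O(1/n)$, so the logarithmic loss is not just an artifact of crude bookkeeping. As written, your sketch establishes the first sentence of the corollary by the paper's own method, but not the second.
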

\begin{proof}
Indeed, we will have that $\frac{1}{w_k}=\frac{1}{w_0}+k+O(1/k)$ and obtain immediately that $w_k \sim 1/k$. So, if we denote $w_{n^2}=g^{\circ n^2}(w)$ and we write $h_j:=h_{w_j}$then
\begin{align*}
H^{\circ 2n+1}(z,g^{\circ n^2}(w))=  H^{\circ 2n+1}(z,w_{n^2})=  (h_{n^2+2n}\circ\ldots\circ h_{n^2+1}\circ h_{n^2}(z),w_{n^2+2n+1})
\end{align*}
where each $h_k(z)$ is as follows:
$$
h_k(z) = \frac{z}{1-z}+\frac{\pi^2}{4}w_k=\frac{z}{1-z}+\frac{\pi^2}{4k}.
$$ 
If we rename $f_1=h_{n^2}, f_2=h_{n^2+1},\ldots, f_{2n+1}=h_{n^2+2n}$ then:
$$
h_{n^2+2n}\circ\ldots\circ h_{n^2+1}\circ h_{n^2}(z) = f_{2n+1}\circ\ldots\circ f_{2}\circ f_{1}(z) 
$$
and
$$
f_k(z) = h_{n^2+k-1}(z)=\frac{z}{1-z}+\frac{\pi^2}{4(n^2+k-1)},
$$
and we see that this reduces to our example 1. Indeed, each $\e_k$ is precisely chosen to be so that $\e_k^2= \frac{\pi^2}{4(n^2+k-1)}$.
\end{proof}

Now, we use example $2$ to prove that a similar construction applies when we change the coefficient in front of the $w$ term on the first coordinate:

\begin{corollary} For the map
$$
L(z,w) = \left(\frac{z}{1-z} + \frac{\pi^2}{8}w, w-w^2 +w^3\right) = (l_w(z),w-w^2+w^3)
$$
then the sequence of maps $L^{\circ 4n+2}(z,g^{\circ 2n^2}(w))$ converges locally uniformly to the map $(z,0)$. 
As a consequence, the sequence $(L^{\circ 2n^2})_{n\geq 0}$ converges locally uniformly to $(\pi_z,0)$ on $\CC\times \mathcal{B}_g$, where $\pi_z$ is the projection to the first coordinate and $\mathcal{B}_g$ is the parabolic basin of $g$.
\end{corollary}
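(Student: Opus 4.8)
The plan is to run the argument used for the map $H$ in the previous corollary, with the second example now playing the role that the first one played there. First I would record the behaviour of the $w$-coordinate. Since $g(w)=w-w^2+w^3$ is parabolic at $0$, for $w$ in a compact subset $K$ of $\mathcal{B}_g$ the iterates $w_j:=g^{\circ j}(w)$ satisfy $\frac{1}{w_{j+1}}=\frac{1}{w_j}+1+O(w_j^2)$; the error is $O(w_j^2)$ and not merely $O(w_j)$ because $(1-w+w^2)^{-1}=1+w+O(w^3)$, i.e. the coefficient of the iterative logarithm of $g$ vanishes. Summing, $\frac{1}{w_j}=j+b(w)+O(1/j)$ with $b$ bounded on $K$, hence $w_j=\frac{1}{j}+O(1/j^2)$ uniformly on $K$.

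Next I would unfold the composition. Writing $L(z,w)=(l_w(z),g(w))$ with $l_w(z)=\frac{z}{1-z}+\frac{\pi^2}{8}w$, one has
$$
L^{\circ 4n+2}\bigl(z,g^{\circ 2n^2}(w)\bigr)=\Bigl(l_{w_{2n^2+4n+1}}\circ\cdots\circ l_{w_{2n^2}}(z),\ w_{2n^2+4n+2}\Bigr),
$$
and $2n^2+4n+2=2(n+1)^2$, so the second coordinate tends to $0$. Put $N=4n+2$, so that $16n^2=(N-2)^2$, and for $1\le k\le N$ set $f_k:=l_{w_{2n^2+k-1}}$, that is $f_k(z)=\frac{z}{1-z}+\e_k^2$ with $\e_k^2=\frac{\pi^2}{8}\,w_{2n^2+k-1}$. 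By the expansion above,
$$
\e_k^2=\frac{\pi^2}{8(2n^2+k-1)}+O(1/N^4)=\frac{\pi^2}{(N-2)^2+8k-8}+O(1/N^4),
$$
and taking the square root gives $\e_k=\frac{\pi}{N}-\frac{2\pi(2k/N-1)}{N^2}+O(1/N^3)$, which is, up to an $O(1/N^3)$ error, exactly the sequence of the second example, equation \eqref{epsilonwandering2}. Thus $\e_k$ has the form required by Theorem~\ref{closebytheorem} with $A=-2\pi$, and that theorem yields $F_N(z)=f_N\circ\cdots\circ f_1(z)=z+\frac{B(z)}{N}$ locally uniformly; together with $w_{2n^2+4n+2}\to0$ this gives $L^{\circ 4n+2}(z,g^{\circ 2n^2}(w))\to(z,0)$ locally uniformly on $\CC\times\mathcal{B}_g$. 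The second assertion, convergence of $(L^{\circ 2n^2})_{n\ge0}$ to $(\pi_z,0)$ on $\CC\times\mathcal{B}_g$, then follows from the first exactly as in \cite{ABDPR}, using the decomposition $L^{\circ 2(n+1)^2}=L^{\circ 4n+2}\circ L^{\circ 2n^2}$.

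The step that requires genuine care is the uniform estimate $w_j=\frac{1}{j}+O(1/j^2)$ on compact subsets of $\mathcal{B}_g$: one must check both that every such compact set lands inside an attracting petal of $g$ after a bounded number of iterations and that the corresponding Fatou-coordinate shift $b(w)$ stays bounded there, so that once the orbit is evaluated at $j\asymp 2n^2\asymp N^2$ the correction is genuinely of size $O(1/N^4)$ and not $O(\log N/N^4)$; the latter would only give $\sum_k a_kU_k^2=O(\log N/N)$, which is a hair too weak for Theorem~\ref{maintheorem}, and it is precisely the vanishing of the iterative-logarithm coefficient of $g$ that rules it out. Everything else is routine: the passage from $\e_k$ to $a_k=2-\e_k^2-2\cos(\pi/N)$, the cancellation $a_k+a_{N-k}=O(1/N^4)$ coming from the symmetry $U_k=U_{N-k}$, and the absorption of the $O(1/N^3)$ discrepancy between our $\e_k$ and that of the second example, are the same computations already carried out in the proofs of Theorem~\ref{maintheorem} and Theorem~\ref{closebytheorem}.
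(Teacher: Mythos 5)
Your argument is correct and is essentially the paper's own proof: the paper simply says the proof "follows exactly as before" (i.e.\ as for the map $H$), with $\e_k$ now given by the sequence \eqref{epsilonwandering2} of the second example, which is precisely the reduction you carry out with $N=4n+2$ and $\e_k^2=\frac{\pi^2}{8}w_{2n^2+k-1}$. You merely make explicit some details the paper leaves implicit — the uniform estimate $w_j=\frac1j+O(1/j^2)$ on compacts of $\mathcal{B}_g$ (using the vanishing iterative residue of $g$) and the harmless index shift/$O(1/N^3)$ discrepancy absorbed by Theorem \ref{closebytheorem} — so no further changes are needed.
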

\begin{proof}
The proof follows exactly as before. The $\e_k$ in this case will be as chosen on \eqref{epsilonwandering2}.
\end{proof}

\section{Final remarks and questions}

\begin{remark}
It would be interesting to see if we could deduce that the result still holds even if the initial map is not the M\"oebius transformation $z/(1-z)$ but instead any other parabolic map. On the case of the classical bifurcation in one dimension, McMullen proved that this is indeed the case \cite{McM}, however it is not clear to us that the same technique can be used when we have different $\e_i$'s.
\end{remark}


\begin{remark} Notice that our starting point for estimates was the estimates on Lemma 3.  That lemma holds for more general Chebyshev generalized polynomials. Let us expand a little more here.
Suppose we are give the sequence $p_0=0, p_1=1$ and $p_k = (x+a_k)p_k - \frac{b_k}{b_{k-1}}p_{k-1}$. Then similar estimates (as in Lemma 3) are obtained for this sequence. Notice that the case $b_k=1$ is the case studied here. However the case of $b_k$ not necessarily equal than $1$ has also similar estimates that will allow us to conclude that $p_N$ and $U_N$ are $O(1/N)$ distant from each other. 
Those more general sequences correspond to more general matrix products, which in principle would allow to have parabolic bifurcations not only for additive perturbations but also for multiplicative perturbations. We hope to study this case in the near future.
\end{remark}

\end{document}